\theoremstyle{theorem}
\newtheorem{lemma}{Lemma}[section]
\newtheorem{theorem}[lemma]{Theorem}
\newtheorem{prop}[lemma]{Proposition}
\newtheorem{corollary}[lemma]{Corollary}
\theoremstyle{remark}
\theoremstyle{definition}
\newcommand\beqnn{\begin{eqnarray*}}
\newcommand\eeqnn{\end{eqnarray*}}
\newcommand\beqn{\begin{eqnarray}}
\newcommand\eeqn{\end{eqnarray}}
\newcommand\bt{{\bar t}}
\newcommand\R{{\mathbb R}}
\newcommand\Ar{\mbox{Argmin}}
\newcommand\vp{\varphi}
\newcommand\bX{{\bar X}}
\newcommand\bT{{\bar \tau}}
\newcommand\tS{U}
\newcommand\eps{\epsilon}
\newcommand\Min{\mbox{Min}}
\newcommand\bvp{\bar \vp}
\newcommand\Net{{\mathcal N}}
\title{Renewal Structure of the Brownian Taut String}
\author{Emmanuel Schertzer}
\begin{document}
\maketitle

\begin{abstract}
In  a recent paper \cite{LS15},  M. Lifshits and E. Setterqvist 
introduced the taut string of a Brownian motion $w$, defined as the function of minimal quadratic energy on $[0,T]$
staying in a tube of fixed width $h>0$ around $w$. The authors
showed a Law of Large Number (L.L.N.) for the quadratic energy spent by the string for a large time $T$. 

In this note, 
we exhibit a natural renewal structure for the Brownian taut string, 
which is directly related to the time decomposition of the Brownian motion in terms of its $h$-extrema (as first introduced by
Neveu and Pitman \cite{NP89}). Using this renewal structure, we derive an expression 
for the constant  in the L.L.N. given in \cite{LS15}. In addition, we provide a Central Limit Theorem (C.L.T.) for the fluctuations of the energy
spent by the Brownian taut string. 
\end{abstract}


\section{Introduction and Main Results}
Let $AC([a,b])$ denote the set of absolutely continuous functions defined on $[a,b]$,
and $||f||_{\infty,[a,b]}$ denote the supremum of the function $|f|$ over $[a,b]$.
Given $T,h>0$ and a continuous function $w$, the taut string associated with $w$ is the function
such that for every strictly convex function  $c$, it is the unique solution
of the following minimization problem  
\beqn\label{Min-problem}
\Min\left\{ \int_0^T c\left(\vp'(u)\right) \ du \ : \ \vp\in \ \mbox{AC}([0,T]),\ \vp(0)=w(0) \ , \vp(T)  = w(T) ,   || w-  \vp ||_{\infty,[0,T]} \leq \frac{h}{2}  \right\}, \nonumber\\
\eeqn
Interestingly, the solution of the latter minimization problem does not depend on the choice of $c$ --- see Proposition \ref{prop-der} for more details.

In a recent paper \cite{LS15}, M. Lifshits and E. Setterqvist studied the long time behavior 
of the taut string constructed around the sample path of a Brownian motion. Using an argument based on a concentration
inequality for Gaussian processes, they showed that if
$w$ is a Wiener process sample path, then there exists  
a (non-explicit) constant ${\mathcal C}$ such that
\begin{equation}\label{limit}
\lim_{T\uparrow\infty} \ \frac{1}{{T}}  \ \int_0^T \ |\eta_T'(u)|^2 \ du  =  {\mathcal C} \ \ \mbox{a.s.,}
\end{equation}
where $\eta_T$ denotes the taut string associated with the path $w$ on the interval $[0,T]$. As they put it, the constant $\mathcal C$ ``shows how
much quadratic energy an absolutely continuous function must spend if it is bounded to stay within a certain distance from the trajectory of $w$''. The aim of this note is to provide
a generalization of their result. We will show that an analogous result holds for
a large class of penalization function $c$, i.e. that for a very general class of functions $c$, there exists ${\mathcal C}_c$ such that
\begin{equation}\label{limit}
\lim_{T\uparrow\infty} \ \frac{1}{{T}}  \ \int_0^T \ c(\eta_T'(u)) \ du  =  {\mathcal C}_c \ \ \mbox{a.s..}
\end{equation}
In addition, we provide 
a semi-explicit expression 
for the constant ${\mathcal C}_c$, and an estimate for the 
fluctuations of the energy around this limiting value.



Our result is based on a decomposition of the Brownian motion
in terms of its $h$-extrema that
was first proposed by
Neveu and Pitman \cite{NP89} and that we now expose.  
\begin{figure}\label{taut}
\includegraphics[scale=0.3]{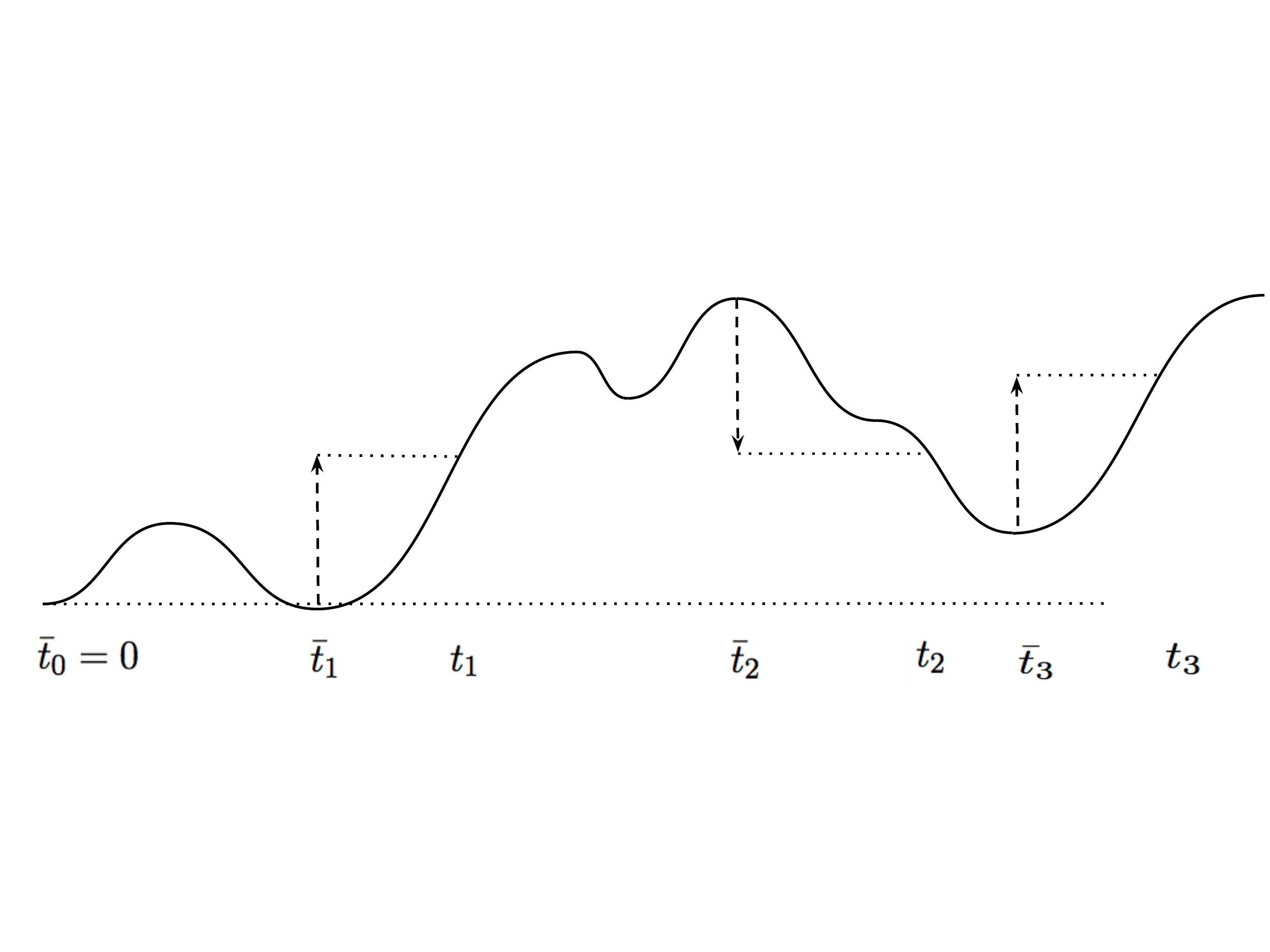}
\caption{Peaks and valleys of the function $w$. }
\end{figure}
Let us
introduce two sequences of times $\{t_n(w)\}_{n\geq0}\equiv \{t_n\}_{n\geq0}$ and $\{\bar t_n(w)\}_{n\geq0}\equiv \{\bar t_n\}_{n\geq0}$:
we first set $t_0,\bar t_0=0$, and for $n\geq0$,
\beqnn
t_{2n+1} & = & \inf\{t \geq t_{2n} :  w(t) - \inf_{[t_{2n},t]} w  = h \} \\
t_{2n+2} & = & \inf\{ t\geq t_{2n+1} : \sup_{[t_{2n+1},t]} w  - w(t) = h \}
\eeqnn
and
\beqnn
\bar t_{2n+1}    &  = & \sup\{ t\in[t_{2n},t_{2n+1}] \ : \  \inf_{[t_{2n},t]} w  = w(t)  \} \\
\bar t_{2n+2} & = & \sup\{t \in [t_{2n+1},t_{2n+2}] \ : \   \sup_{[t_{2n+1},t]} w = w(t) \} 
\eeqnn
In other words, $\bar t_{2n+1}$ (resp., $\bar t_{2n+2}$) is the starting time of the first upper (resp., lower) 
sub-excursion of height $\geq h$ after time $t_{2n}$ (resp., $t_{2n+1}$),  see Fig. \ref{taut}. 
Following the terminology of Neveu and Pitman \cite{NP89},
the
$w(\bar t_n)$'s correspond to the $h$-exterma 
of the function $w$, whereas the times $t_n$'s can be thought of as 
delimiting the successive peaks and valleys 
of height and depth greater than $h$ formed by the path $w$.  
In \cite{NP89}, the authors proved that this decomposition is natural 
for the Brownian motion: for a Wiener sample path $w$,
the sequence of paths $\{(-1)^n\left(w(\bt_n+u)-w(\bt_n)\right); \ 0 \leq u \leq \bt_{n+1}-\bt_{n})\}_{n\geq1}$ is a sequence
of i.i.d. random variables.



From Proposition \ref{ghk} in the Appendix, for every $i\geq1$, 
there exists a unique solution 
to the following minimization problem
\beqnn
\Min\{ \int_{\bt_i}^{\bt_{i+1}} \ |\vp'(u)|^2 \ du \ : \ \vp\in \ \mbox{AC}(I_i),\\
 \vp(\bt_i) = w(\bt_i) - (-1)^i \frac{h}{2} \ , \  \vp(\bt_{i+1}) = w(\bt_{i+1}) + (-1)^i \frac{h}{2},   \ || w-  \vp ||_{\infty, I_i} \leq \frac{h}{2} \},
\eeqnn
where $I_i:=[\bt_{i}, \bt_{i+1}]$.  We  denote this element by $\psi_i$.
The following theorem shows that the Neveu and Pitman's decomposition is also natural for the taut string.

\begin{theorem}\label{main}
Let $w$ be a continuous function (deterministic or random) and let $\eta_T$ be the associated taut string on $[0,T]$. 
Let
$$
N(T) \ = \ \sup\{n \ : \ \bt_n \leq T\}.
$$
If $N(T)\geq 4$,
the taut string $\eta_T$ and the function $\psi_i$ coincide on $[\bt_i, \bt_{i+1}]$ for every $2\leq i\leq N(T)-2$.  
\end{theorem}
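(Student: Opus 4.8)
The key fact I want to exploit is that the taut string is a *local* object in the following sense: once we know that the taut string $\eta_T$ passes through a given point $(\bt_i, p_i)$ at an intermediate time $\bt_i$, its restriction to $[\bt_i, \bt_{i+1}]$ must be the minimizer of the energy among all absolutely continuous functions through the prescribed endpoints $(\bt_i, p_i)$ and $(\bt_{i+1}, p_{i+1})$ that stay in the tube around $w$. So the whole argument reduces to two claims: (a) for $2 \le i \le N(T)-2$, the taut string *does* pass through the specific points $w(\bt_i) - (-1)^i h/2$ at time $\bt_i$, and (b) on each such interval $I_i$ the tube constraint $\|w - \varphi\|_{\infty, I_i} \le h/2$ at these endpoints is "active at the ends" in exactly the way that makes $\psi_i$ the solution of the restricted problem. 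Given (a) and (b), uniqueness of the minimizer (Proposition \ref{ghk}) forces $\eta_T = \psi_i$ on $I_i$, since otherwise one could splice $\psi_i$ into $\eta_T$ on $I_i$ and strictly lower the global energy, contradicting minimality of $\eta_T$ — here one uses strict convexity of $c$, or equivalently the characterization of Proposition \ref{prop-der} that the minimizer does not depend on $c$, so we may as well work with $c(x) = x^2$.

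The heart of the matter is therefore claim (a): identifying where the taut string must touch the boundary of the tube. The geometric picture (Fig.\ \ref{taut}) is that at time $\bt_{2n+1}$ the path $w$ is at the bottom of a valley of depth $\ge h$, so the lower boundary $w - h/2$ has a local max there and the string, being "pulled taut", is forced up against the upper wall $w + h/2$; symmetrically at $\bt_{2n}$ it is forced against the lower wall $w - h/2$. More precisely, I would argue that at an $h$-extremum $\bt_i$ the string is *pinned*: by definition of $\bt_i$ as the location of the last minimum of $w$ on $[t_{i-1}, t_i]$ (for odd $i$), just before $\bt_i$ the path $w$ has descended by $h$ from a previous level and just after $\bt_i$ it will ascend by $h$; in either of the two one-sided neighborhoods the tube of width $h$ around $w$ is so narrow that any admissible function must be within $h/2$ of $w(\bt_i)$, and in fact must equal $w(\bt_i) + (-1)^{i+1} h/2$ at $\bt_i$ itself. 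This is where I expect the main obstacle to lie: one must show carefully that the constraint coming from the neighboring valleys/peaks, which have depth/height at least $h$ but whose exact shape is not known, genuinely forces the string to the wall, and that this happens strictly *inside* $[\bt_2, T-\text{(last block)}]$, which is why the hypothesis $N(T)\ge 4$ and the restriction $2 \le i \le N(T)-2$ appear — near the two ends of $[0,T]$ the boundary conditions $\varphi(0) = w(0)$, $\varphi(T) = w(T)$ interfere and the pinning argument breaks down.

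Concretely I would proceed in the following order. First, reduce to $c(x) = x^2$ using Proposition \ref{prop-der}. Second, prove the pinning lemma: for every $i$ with $2 \le i \le N(T)-1$, any admissible $\varphi$ (in particular $\eta_T$) satisfies $\varphi(\bt_i) = w(\bt_i) - (-1)^i h/2$ — this is the step requiring the careful tube-geometry analysis near an $h$-extremum, and it is the crux. Third, given the pinning at both $\bt_i$ and $\bt_{i+1}$ for $2 \le i \le N(T)-2$, observe that $\eta_T|_{I_i}$ is admissible for the minimization problem defining $\psi_i$, and conversely that the function obtained from $\eta_T$ by replacing its values on $I_i$ with $\psi_i$ is admissible for the global problem \eqref{Min-problem}; a splicing/exchange argument together with the uniqueness in Proposition \ref{ghk} then yields $\eta_T = \psi_i$ on $I_i$. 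A minor point to check in the third step is that the spliced function is still absolutely continuous and still lies in the global tube — this is immediate since $\psi_i$ agrees with $\eta_T$ at the endpoints of $I_i$ and both satisfy the tube bound on their respective domains. I expect steps one and three to be routine; essentially all the work is in the pinning lemma.
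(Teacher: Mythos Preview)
Your proposal has a genuine gap in what you call the ``pinning lemma.'' You assert that for $2 \le i \le N(T)-1$, \emph{any} admissible $\varphi$ (i.e., any $\varphi \in \mathrm{AC}([0,T])$ with $\|\varphi - w\|_{\infty,[0,T]} \le h/2$) satisfies $\varphi(\bt_i) = w(\bt_i) - (-1)^i h/2$. This is false: the tube has full width $h$ at $\bt_i$, so $\varphi(\bt_i)$ is only constrained to lie in $[w(\bt_i)-h/2,\, w(\bt_i)+h/2]$. For a concrete counterexample, take $w$ piecewise linear with slopes large enough to create $h$-extrema, and let $\varphi = w$; then $\varphi$ is admissible but sits at the center of the tube everywhere. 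The geometric reasoning you sketch (``the tube is so narrow\ldots'') only shows that any admissible $\varphi$ must \emph{cross} the level $w(\bt_i) - (-1)^i h/2$ somewhere near $\bt_i$ (e.g.\ on $[\bt_i,t_i]$, since $\varphi(\bt_i)\le w(\bt_i)+h/2$ while $\varphi(t_i)\ge w(t_i)-h/2 = w(\bt_i)+h/2$ for odd $i$), not that the crossing occurs precisely at time $\bt_i$.

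Even restricted to the minimizer $\eta_T$, tube geometry alone does not pin the value at $\bt_i$; you would need to invoke minimality, and the clean way to do that is essentially the paper's route. The paper proceeds indirectly: it first shows (Lemma~\ref{lemma:2.1}, Corollary~\ref{lem:dec}) that the \emph{free-boundary} minimizer on each $I_i$ is pinned at both endpoints, so the concatenation $\Phi$ of the $\psi_i$'s touches alternating walls at the $\bt_i$'s. Since $\eta_T$ stays in the tube, the difference $\eta_T-\Phi$ changes sign on every $[\bt_i,\bt_{i+1}]$, producing crossing points $v_i\in[\bt_i,\bt_{i+1}]$; uniqueness of the fixed-endpoint minimizer on $[v_1,v_{N(T)-1}]$ then forces $\eta_T=\Phi$ there, and $[\bt_2,\bt_{N(T)-1}]\subset[v_1,v_{N(T)-1}]$ finishes. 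Pinning of $\eta_T$ at the $\bt_i$'s is thus a \emph{consequence} of the theorem (as the paper remarks right after the statement), not an input. Your steps~1 and~3 are fine; step~2 as written does not go through, and any correct replacement will essentially rediscover this crossing argument.
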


We note in passing that the previous result implies the $\bt_i$'s  for $2\leq i\leq N(T)-1$
are knot points for the taut string, i.e., that the  string 
hits the boundary of the tube
at those times.

Let $w$ be the sample path of a Brownian motion. Since
the sequence
$$\{(-1)^n\left(w(\bt_n+u)-w(\bt_n)\right); \ 0 \leq u \leq \bt_{n+1}-\bt_{n})\}_{n\geq1}$$ 
is a sequence of i.i.d random variables, the latter 
result implies (at least informally) that the Brownian taut string on $[\bt_2, \bt_{N(T)}-1]$ is obtained by pasting together 
independent copies of the random path $\psi_1$ (up to a change of sign). 
Using standard limit theorems,
we can then leverage this intrinsic renewal structure
to derive
a L.L.N. and a C.L.T. for the long time behavior of the energy spent by Brownian taut string.

\begin{theorem}\label{main0}
Let $h,T>0$. Let $c$ be a locally bounded function, and $\alpha\geq0$ be such that 
$
\lim_{|x|\rightarrow\infty} c(x)/x^\alpha = 0.
$
Define
\beqnn
\tau & = & \bt_4 - \bt_2,  \\
{\mathcal E} & = & \int_{\bt_2}^{\bt_3} c\left( \psi_2'(u) \right) du +  \int_{\bt_3}^{\bt_4} c\left( \psi_3'(u) \right) du.
\eeqnn
If $w$ be the sample path of a Brownian motion, then
\begin{enumerate}
\item[(1)] $\tau$ and ${\mathcal E}$ have all finite moments.
\item[(2)]   $\frac{1}{T} \int_0^T \ c\left(\eta_T'(u)\right) \ du  \rightarrow E( {\mathcal E})/ E({\mathcal \tau})  \ \ \mbox{a.s..}$
\item[(3)]  
$$\frac{1}{\sqrt{T \ \hat \sigma^2 }}\left( \ \int_0^T \ c\left(\eta_T'(u)\right) \ du  \ - \ {T} \ \frac{E( {\mathcal E})}{E({\mathcal \tau})}    \right) \   \rightarrow  \ 
{\mathcal Z}\ \ \mbox{in distribution,}$$
\end{enumerate}
where ${\mathcal Z}$ is a standard normal random
variable and 
$$
\hat \sigma^2=\ \frac{(E( {\mathcal E} ))^2}{\left(E(\tau)\right)^3} \mbox{Var}(\tau) \ + \ \frac{\mbox{Var}({\mathcal E})}{E(\tau)} 
- 2 \mbox{Cov}(\tau, {\mathcal E})  \frac{E({\mathcal E})}{(E(\tau))^2}
$$
\end{theorem}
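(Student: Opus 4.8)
The plan is to deduce Theorem \ref{main0} from Theorem \ref{main} by setting up a renewal-reward argument on the blocks $[\bt_{2n}, \bt_{2n+2}]$ and then transferring the asymptotics from block-indexed sums to the time-indexed integral. First I would record the i.i.d.\ structure: by Neveu--Pitman, the paths $\{(-1)^n(w(\bt_n+u)-w(\bt_n)):0\le u\le\bt_{n+1}-\bt_n\}_{n\ge1}$ are i.i.d., hence so are the pairs $(\tau_n,{\mathcal E}_n):=(\bt_{2n+2}-\bt_{2n},\ \int_{\bt_{2n}}^{\bt_{2n+1}}c(\psi_{2n}')+\int_{\bt_{2n+1}}^{\bt_{2n+2}}c(\psi_{2n+1}'))$ for $n\ge1$, each distributed as $(\tau,{\mathcal E})$; and by Theorem \ref{main}, for $2\le i\le N(T)-2$ the taut string equals $\psi_i$ on $I_i$, so $\int_{\bt_2}^{\bt_{2m}}c(\eta_T'(u))\,du=\sum_{n=1}^{m-1}{\mathcal E}_n$ whenever $2m\le N(T)-1$. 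Thus the whole problem reduces to (i) moment bounds for $(\tau,{\mathcal E})$, and (ii) a renewal-theoretic comparison between $T$, the counting variable $N(T)$, and the partial sums of the $\tau_n$.

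For part (1), I would control $\tau=\bt_4-\bt_2$ and ${\mathcal E}$ separately. The tail of $\tau$ is governed by the time for a Brownian motion to complete two successive sub-excursions of height $h$; this is stochastically dominated by a sum of independent copies of hitting times of level $h$ by $|w|$ (or of the excursion-straddling times), which have exponential moments, so $E[e^{\lambda\tau}]<\infty$ for small $\lambda>0$ and in particular all moments are finite. For ${\mathcal E}$, the Euler--Lagrange/convexity analysis in the Appendix (Proposition \ref{prop-der}, Proposition \ref{ghk}) gives an explicit piecewise-linear description of $\psi_i$ whose slopes are bounded by (increments of $w$ inside the block)$/$(subinterval lengths); combined with $c$ locally bounded and the growth condition $c(x)/x^\alpha\to0$, one gets ${\mathcal E}\le C(1+\tau)\cdot\sup_{\text{block}}|w'\text{-proxy}|^{\alpha}$-type bound, more precisely ${\mathcal E}$ is dominated by $\tau\cdot\big(1+(\,\mathrm{osc}_{[\bt_2,\bt_4]}w/\delta\,)^{\alpha}\big)$ for a lower bound $\delta$ on the relevant subinterval lengths; using that $\mathrm{osc}$ over a block has Gaussian-type tails given $\tau$, and that $\delta$ cannot be too small without forcing $w$ to oscillate wildly, Hölder's inequality yields all moments of ${\mathcal E}$. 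I expect this moment estimate for ${\mathcal E}$ — making rigorous the bound on the slopes of $\psi_i$ and its interaction with the short-subinterval event — to be the main obstacle; everything else is standard.

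Given (1), part (2) is the renewal-reward (elementary renewal / strong law) argument. Let $S_m=\bt_2+\sum_{n=1}^m\tau_n$ and $M(T)=\sup\{m:S_m\le T\}$; since $E[\tau]<\infty$ and $\tau>0$ a.s., $M(T)/T\to1/E[\tau]$ a.s.\ and $S_{M(T)}/T\to1$. By Theorem \ref{main}, $\int_0^T c(\eta_T')=\sum_{n=1}^{M(T)}{\mathcal E}_n+R_T$ where $R_T$ collects the contributions from the two boundary blocks at the start ($[0,\bt_2]$) and end ($[\bt_{2M(T)},T]$, equivalently up to $\bt_{N(T)}$ and beyond) that are \emph{not} covered by the range $2\le i\le N(T)-2$; each such boundary term is bounded in terms of finitely many blocks near the ends, hence $R_T/T\to0$ a.s.\ (the end block is one copy of an ${\mathcal E}$-type variable evaluated at the current time, which is $o(T)$ a.s.\ by Borel--Cantelli using (1)). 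Then $\frac1T\sum_{n=1}^{M(T)}{\mathcal E}_n=\frac{M(T)}{T}\cdot\frac1{M(T)}\sum_{n=1}^{M(T)}{\mathcal E}_n\to\frac1{E[\tau]}\cdot E[{\mathcal E}]$ by the SLLN and the random-index SLLN, giving (2).

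For part (3) I would run the standard CLT for renewal-reward processes. Write $\int_0^T c(\eta_T')-T\,\frac{E[{\mathcal E}]}{E[\tau]}=\sum_{n=1}^{M(T)}\big({\mathcal E}_n-\tfrac{E[{\mathcal E}]}{E[\tau]}\tau_n\big)+\big(\text{centered boundary terms}\big)+\frac{E[{\mathcal E}]}{E[\tau]}\big(S_{M(T)}-\bt_2-T\big)$. The variables $Y_n:={\mathcal E}_n-\tfrac{E[{\mathcal E}]}{E[\tau]}\tau_n$ are i.i.d., centered, with finite variance $\mathrm{Var}(Y_1)=\mathrm{Var}({\mathcal E})-2\tfrac{E[{\mathcal E}]}{E[\tau]}\mathrm{Cov}(\tau,{\mathcal E})+\tfrac{(E[{\mathcal E}])^2}{(E[\tau])^2}\mathrm{Var}(\tau)$ — all finite by (1). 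Since $M(T)/T\to1/E[\tau]$, Anscombe's theorem gives $\frac1{\sqrt{M(T)}}\sum_{n\le M(T)}Y_n\Rightarrow\mathcal N(0,\mathrm{Var}(Y_1))$, hence $\frac1{\sqrt{T}}\sum_{n\le M(T)}Y_n\Rightarrow\mathcal N(0,\mathrm{Var}(Y_1)/E[\tau])$. The remaining terms are negligible: the overshoot $S_{M(T)}-T$ is tight (it converges in distribution by renewal theory, using $E[\tau^2]<\infty$), hence $O_P(1)=o_P(\sqrt T)$, and likewise the two boundary-block corrections are $O_P(1)$. Collecting the pieces and noting $\mathrm{Var}(Y_1)/E[\tau]=\hat\sigma^2$ as displayed in the statement, Slutsky's theorem yields $\frac1{\sqrt{T\hat\sigma^2}}\big(\int_0^T c(\eta_T')-T\tfrac{E[{\mathcal E}]}{E[\tau]}\big)\Rightarrow{\mathcal Z}$, completing the proof.
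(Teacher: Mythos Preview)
Your renewal-reward scaffolding is exactly the paper's, and your CLT computation via the centered variables $Y_n={\mathcal E}_n-\tfrac{E[{\mathcal E}]}{E[\tau]}\tau_n$ plus Anscombe is correct and in fact slightly cleaner than the paper's Proposition \ref{CTLA}; the variance check $\mathrm{Var}(Y_1)/E[\tau]=\hat\sigma^2$ is right.

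The real gap is the moment bound for ${\mathcal E}$, and your sketch will not close it. Propositions \ref{ghk} and \ref{prop-der} do \emph{not} give a finite piecewise-linear description of $\psi_i$; for a Brownian sample path the taut string typically has infinitely many contact points with the tube boundary, so there is no ``lower bound $\delta$ on the relevant subinterval lengths'' to invoke, and the bound $\tau\cdot(1+(\mathrm{osc}\,w/\delta)^\alpha)$ is not well-defined. The paper's device is different and is the key idea you are missing: rather than analyze $\psi_i$ directly, one observes (Corollary \ref{lem:dec}) that $\psi_i$ is also the \emph{free-boundary} minimizer for the cost $|x|^\alpha$ on $I_i$, so it suffices to produce \emph{any} admissible function and bound its energy. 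The paper takes the ``free-knot approximation'' $\phi$ obtained by linearly interpolating $w$ at the stopping times $\sigma_n=\inf\{t\ge\sigma_{n-1}:|w(t)-w(\sigma_{n-1})|=h/4\}$; this $\phi$ stays in the $h$-tube by construction, and its $\alpha$-energy on $[0,\sigma_{4n_2}]$ is exactly $(h/4)^\alpha\sum_{i\le 4n_2}(\sigma_i-\sigma_{i-1})^{-(\alpha-1)}$. One then needs two ingredients: (i) a combinatorial lemma that $\bt_2-\bt_1\le\sigma_{4n_2}$ for a geometric-type index $n_2$ built from the signs $\Delta_i$, and (ii) finiteness of all \emph{negative} moments of $\sigma_1$ (Lemma \ref{moments}), which is what makes the sum above integrable. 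Neither ingredient appears in your outline.

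A secondary point you are underselling: the end-boundary term $\int_{\bt_{N(T)-1}}^T c(\eta_T'(u))\,du$ is \emph{not} ``bounded in terms of finitely many blocks near the end'' in any automatic way, because on $[\bt_{N(T)-1},T]$ the function $\eta_T$ is not $\psi_{N(T)-1}$ (Theorem \ref{main} stops at $i\le N(T)-2$), and it changes with $T$. The paper spends a separate lemma (Lemma \ref{njk}) to show tightness of this term, again by comparing $\eta_T$ on a suitable subinterval with the free-knot approximation $f_T$ and using renewal theory for the backward recurrence times of both the $\bt_i$- and $\sigma_n$-sequences. Your Borel--Cantelli remark would need this tightness as input.
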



\bigskip

{\bf Outline of the paper.} The rest of the paper will be organized as follows. In Section \ref{proof:main}, 
we give a proof of Theorem \ref{main} and show some technical results that will be useful to
control the moments of ${\mathcal E}$ and $\tau$. In Section \ref{main0},
we give an outline of the proof of Theorem \ref{main0}, and postpone 
technical details to later sections. More precisely, in Section \ref{s:moments}, we control the moments of 
 ${\mathcal E}$ and $\tau$ and show some tightness results. In Section \ref{proof:CLTA}, we prove an extension of the C.L.T. for renewal processes.
 Finally, in the Appendix, we show some results related to the taut string in a deterministic setting.

\section{Proof of Theorem \ref{main}}
\label{proof:main}


In this section, we prove Theorem  \ref{main}. Along the way, we also prove a result
that  will be instrumental in controlling the moments of $\tau$ and ${\mathcal E}$.
In the following, $\bar c$ will denote an even, strictly convex function.

\begin{lemma}\label{lemma:2.1}
Let $f$ be a continuous function
such that $\bt_1(f)=0$. Let $V\geq t_1(f)$
and let $\Psi$
be the unique minimizer of
$$
\Min\{ \int_0^V \ \bar c(\vp'(u)) \ du \ : \ \vp\in \ \mbox{AC}([0,V]),   || f-  \vp ||_{\infty,[0,V]} \leq \frac{h}{2}  \}.
$$
Then $\Psi(0)=f(0)+\frac{h}{2}$
\end{lemma}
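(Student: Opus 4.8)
Lemma 2.1 asserts that under the stated hypotheses ($\bt_1(f)=0$, meaning time $0$ is the start of the first upward sub-excursion of height $\geq h$, and $V \geq t_1(f)$, meaning the interval is long enough to contain a full rise of height $h$), the energy-minimizing string in the tube of width $h$ around $f$ must start at the \emph{top} of the tube: $\Psi(0) = f(0) + h/2$. The geometric picture is that since $f$ immediately rises (it has no sub-excursion of depth $\geq h$ before $t_1$, and $f(t_1) - f(0)$... more precisely $f$ climbs by $h$ from its running infimum), a string starting low in the tube would be forced to climb steeply, whereas starting at the top lets it stay flat longer; convexity of $\bar c$ then penalizes the steep start.

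\medskip

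\textbf{Proof strategy.} The plan is to argue by contradiction: suppose $\Psi(0) = f(0) + a$ with $a < h/2$, and construct a competitor $\tilde\Psi$ with strictly smaller $\bar c$-energy, contradicting minimality. First I would record the key consequence of $\bt_1(f) = 0$ and $V \geq t_1(f)$: the constraint $\|f - \varphi\|_{\infty,[0,V]} \leq h/2$ together with the shape of $f$ forces $\Psi$ to be nondecreasing on an initial segment and in fact to reach the level $f(0) + h/2$ at or before some time related to $t_1(f)$ — because the lower boundary $f(\cdot) - h/2$ of the tube eventually rises above $f(0) + a$. Concretely, since $f$ has no downward sub-excursion of depth $\geq h$ on $[0, t_1]$, the running infimum of $f$ stays at $f(0)$ until... and $f(t_1) = \inf_{[0,t_1]} f + h = f(0) + h$ (using $\bt_1(f) = 0$, which pins the infimum), so the tube's lower edge at time $t_1$ is at height $f(0) + h/2 > f(0) + a$; hence $\Psi(t_1) \geq f(0) + h/2$, i.e., $\Psi$ must rise by at least $h/2 - a$ on $[0, t_1]$.

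\medskip

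\textbf{The competitor.} Given that $\Psi$ rises from $f(0)+a$ to at least $f(0) + h/2$ on $[0,t_1]$, let $s = \inf\{u : \Psi(u) = f(0) + h/2\} \leq t_1$. On $[0,s]$, replace $\Psi$ by the competitor $\tilde\Psi$ that is constant equal to $f(0) + h/2$ on $[0, s - \delta]$ for suitable small $\delta$ and then interpolates — or, cleaner, replace $\Psi|_{[0,s]}$ by the straight line from $f(0) + h/2$ down to $\Psi(s) = f(0)+h/2$... — actually the cleanest competitor: on $[0,s]$ define $\tilde\Psi$ to be the \emph{reflection/rearrangement} making it start at the top. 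I would instead use: since the chord of $\Psi$ on $[0,s]$ goes from $f(0)+a$ up to $f(0)+h/2$, and $\Psi$ stays in the tube, check that the function $\tilde\Psi(u) = \Psi(u) + (h/2 - a)(1 - u/s)$ for $u \in [0,s]$ (shifted up near $0$, unchanged at $s$) still satisfies the tube constraint — this needs $\Psi(u) + (h/2-a)(1-u/s) \leq f(u) + h/2$, which should follow because on $[0,s]$ the upper tube boundary $f(\cdot)+h/2$ dominates $f(0)+h/2$ once $f$ has started rising, combined with $\Psi(u) \leq f(u) + h/2$. Then compare energies: $\tilde\Psi' = \Psi' - (h/2-a)/s$, a constant downward shift of the derivative; since $\int_0^s \Psi' = h/2 - a > 0$ so the average of $\Psi'$ is $(h/2-a)/s$, shifting it to have average $0$ strictly decreases $\int_0^s \bar c(\Psi')$ by strict convexity of $\bar c$ (Jensen, with equality only if $\Psi'$ is a.e. constant, a case handled separately by a direct computation). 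This contradicts minimality, so $a = h/2$.

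\medskip

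\textbf{Main obstacle.} The delicate point is verifying that the shifted competitor \emph{still lies in the tube} on all of $[0, s]$ — i.e., that pushing $\Psi$ upward near time $0$ does not violate $\tilde\Psi \leq f + h/2$. This requires extracting from $\bt_1(f) = 0$ precise control on how fast the upper boundary $f(u) + h/2$ grows away from its value $f(0) + h/2$ at the left endpoint; in the worst case $f$ could dip slightly below $f(0)$ right after $0$ (no — $\bt_1(f)=0$ forbids $f$ from going below $f(0)$ before $t_1$, since $0$ is by definition the last time the running infimum is attained before the height-$h$ rise... I should double-check this pins $\inf_{[0,t_1]} f = f(0)$). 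If $f$ can momentarily decrease, the naive linear shift fails and one must instead use a competitor that only raises $\Psi$ on a possibly shorter subinterval, or raise it to the lowest "safe" level and re-optimize — but in all cases the convexity/Jensen argument delivers the strict improvement. I would also need the basic existence/uniqueness and structural facts about the deterministic taut string from Proposition \ref{ghk} in the Appendix to justify working with $\Psi$ at all.
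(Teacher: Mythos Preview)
Your core observation matches the paper: since $\bar t_1(f)=0$ forces $\inf_{[0,t_1]} f = f(0)$ (your suspicion is correct) and hence $f(t_1)=f(0)+h$, any admissible $\varphi$ satisfies $\varphi(0)\leq f(0)+h/2$ and $\varphi(t_1)\geq f(0)+h/2$, so it hits the level $f(0)+h/2$ at some $s\in[0,t_1]$. The gap lies in the competitor you commit to. The linear tilt $\tilde\Psi(u)=\Psi(u)+(h/2-a)(1-u/s)$ has two genuine problems. First, the upper tube constraint can fail: if $f$ rises very slowly on an initial interval (strictly, as $\bar t_1(f)=0$ demands) while $\Psi$ rises quickly to just below $f(0)+h/2$ and then plateaus until $s$, the tilt pushes $\tilde\Psi$ above $f+h/2$ near $0$; so your ``main obstacle'' is not removable for this competitor. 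Second, the energy comparison does not follow from Jensen: Jensen gives $\bar c(\mathrm{mean}(g))\leq \mathrm{mean}(\bar c(g))$, not a comparison between $\int\bar c(g)$ and $\int\bar c(g-\mu)$, and for an even strictly convex $\bar c$ growing nearly linearly at infinity one can produce $g$ with mean $\mu>0$ and $\int\bar c(g-\mu)>\int\bar c(g)$.

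The fix is the competitor you mention in passing (``the straight line from $f(0)+h/2$ down to $\Psi(s)=f(0)+h/2$'') and then abandon: on $[0,s]$ replace $\Psi$ by the \emph{constant} $f(0)+h/2$, keeping $\Psi$ on $[s,V]$. This is exactly the paper's argument. The constant lies in the tube on $[0,s]\subset[0,t_1]$ because $f(0)\leq f(u)\leq f(0)+h$ there (the left inequality from $\bar t_1(f)=0$, the right from the definition of $t_1$). Its derivative vanishes, so the energy comparison is pointwise and trivial: $\bar c(0)\leq\bar c(\Psi'(u))$ since an even strictly convex function has its minimum at $0$. Uniqueness of the minimizer (Proposition~\ref{ghk}) then forces $\Psi(0)=f(0)+h/2$.
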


\begin{proof}
The existence and uniqeness of $\Psi$ are given by Proposition \ref{ghk} in the Appendix. Next, we need to show that for any absolutely continuous function $\vp$,
such that $||\vp-f||_{\infty,[0,V]}\leq h/2$, 
we can construct an absolutely continuous function $\bar \vp$ such that (1)  $\bvp$ stays in the tube of width $h$ 
around $f$ (i.e., such that $||\bar \vp-f||_{\infty,[0,V]}\leq h/2$), (2) we have the following boundary condition
\beqnn
\bar \vp(0)  =  f(0) + h/2
\eeqnn
and finally, (3) $\bar \vp$ has lower $\bar c$-energy:  $\int_{0}^{V} \bar c( \bar \vp'(u) ) du \ \leq \ \int_{0}^{V} \bar c( \vp'(u) ) du$.

We first claim that for every admissible function $\vp$ (i.e., $\vp$ is AC and satisfies condition (1) above), there exists $s\in[0, t_1(f)]$ such that 
$
\vp(s)=f(0)+h/2. 
$
This simply follows from the fact that 
for every $\vp\in AC([0,V])$
$$
\vp(0)\leq f(0) + h/2 \ \ \mbox{and} \ \ \vp(t_1(f)) \geq f(t_1(f)) - h/2 \ = \ f(0) + h/2, 
$$
where we used the fact that $\bar t_1(f)=0$.
This yields the existence of $s$ in $[0, t_{1}(f)]$ such that $
\vp(s)=f(0)+h/2 
$, as claimed earlier. 

Let us now consider the absolutely continuous $\bar \vp$ defined on $[0, V]$
as follows:
$$
\bar \vp(u) \ = \ 1_{u\in[0,s]} (f(0) + h /2) \ + \   1_{u\in[s, V]} \vp(u).
$$
From the definition of $t_1(f)$, it is straightforward to check that this function is guaranteed to stay in the tube 
of width $h$. Furthermore, since the minimum of $\bar c$ is attained at $0$, we have:
$$
\int_{0}^{V} \bar c( \bar \vp'(u) ) du \ \leq \ \int_{0}^{V} \bar c( \vp'(u) ) du,
$$ 
which ends the proof of our lemma.

\end{proof}

\begin{corollary}\label{lem:dec}
For every $i\geq1$, $\psi_i$ is the unique minimizer of the free-boundary problem
\beqnn
\Min\{ \int_{\bt_i}^{\bt_{i+1}} \ \bar c(\vp'(u)) \ du \ : \ \vp\in \ \mbox{AC}(I_i),  \ || w-  \vp ||_{\infty, I_i} \leq \frac{h}{2} \}, 
\eeqnn
\end{corollary}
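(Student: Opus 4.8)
The plan is to show that the free-boundary $\bar c$-minimizer appearing in the statement, call it $\Phi_i$, has exactly the two endpoint values that enter the definition of $\psi_i$, and then to conclude using the fact that a taut string with prescribed endpoints does not depend on the strictly convex penalization --- this last fact being Proposition \ref{prop-der}. Existence and uniqueness of $\Phi_i$ are furnished by Proposition \ref{ghk}. The reduction is the following: once we know
$$
\Phi_i(\bt_i) \ = \ w(\bt_i) - (-1)^i \frac{h}{2}, \qquad \Phi_i(\bt_{i+1}) \ = \ w(\bt_{i+1}) + (-1)^i \frac{h}{2},
$$
then $\Phi_i$ lies in the admissible class of the fixed-endpoint problem defining $\psi_i$, and since $\Phi_i$ minimizes $\int_{I_i}\bar c(\vp')$ over the larger free-boundary class it minimizes it over this subclass as well; by uniqueness this makes $\Phi_i$ the fixed-endpoint $\bar c$-taut string on $I_i$, which by Proposition \ref{prop-der} coincides with the fixed-endpoint $|\cdot|^2$-taut string, i.e.\ with $\psi_i$.

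To obtain the left endpoint value I would translate $I_i$ so that it starts at the origin and, when $i$ is even, reflect through the horizontal axis, setting $f(u) = (-1)^{i+1}\bigl(w(\bt_i+u)-w(\bt_i)\bigr)$ for $u\in[0,\bt_{i+1}-\bt_i]$. Because $\bar c$ is even, this operation changes neither the admissible class nor the energy, so the minimizer read in the new coordinates is $\Phi_i$. The point is that, by the ``last-time'' convention in the definition of $\bt_i$, the path $w$ stays strictly on one side of the level $w(\bt_i)$ throughout $(\bt_i,t_i]$, which forces $\bt_1(f)=0$ and $t_1(f)=t_i-\bt_i$; since moreover $t_i\le\bt_{i+1}$, we get $V:=\bt_{i+1}-\bt_i\ge t_1(f)$. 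Lemma \ref{lemma:2.1} then applies and, after undoing the translation and the reflection, yields $\Phi_i(\bt_i)=w(\bt_i)-(-1)^i\frac{h}{2}$.

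For the right endpoint I would run the same argument on the time-reversed picture: the free-boundary problem on $I_i$ is invariant under $u\mapsto \bt_i+\bt_{i+1}-u$ (again because $\bar c$ is even), and the time-reversal of $w|_{I_i}$, reflected when $i$ is odd, once more satisfies the hypotheses of Lemma \ref{lemma:2.1}. Here one uses that $w(\bt_{i+1})$ is the global extremum of $w$ over $I_i$ of the relevant type, so that the reversed function stays on one side of its value at $0$, and that $|w(\bt_{i+1})-w(\bt_i)|\ge h$, which forces the reversed function to complete a displacement of size $h$ inside $I_i$, i.e.\ its $t_1$ is at most $V$. Lemma \ref{lemma:2.1} then gives $\Phi_i(\bt_{i+1})=w(\bt_{i+1})+(-1)^i\frac{h}{2}$, and together with the previous paragraph this completes the proof.

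The main obstacle, I expect, is purely the bookkeeping in the two preceding paragraphs: extracting from the definitions of the $t_n$ and $\bt_n$ --- and specifically from the fact that the $\bt_n$ are defined via a \emph{last} time at which a running extremum is attained --- the two facts $\bt_1=0$ and $t_1\le V$ for each of the four translated/reflected/reversed restrictions of $w$ that occur, and checking that reflection and time-reversal really leave the minimization problem unchanged (which is exactly where the evenness of $\bar c$ enters). Everything else is formal, resting only on Lemma \ref{lemma:2.1}, Proposition \ref{ghk}, and the cost-independence of Proposition \ref{prop-der}.
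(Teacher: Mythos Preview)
Your proposal is correct and follows essentially the same route as the paper: both first show that the free-boundary $\bar c$-minimizer (your $\Phi_i$, the paper's $Q$) takes the two prescribed endpoint values, both obtain each endpoint by applying Lemma~\ref{lemma:2.1} to a suitably translated/reflected or time-reversed copy of $w|_{I_i}$ (using the evenness of $\bar c$), and both conclude via Proposition~\ref{prop-der}. The only cosmetic differences are that you subtract $w(\bt_i)$ in defining $f$ (the paper does not) and that you spell out somewhat more of the bookkeeping behind $\bt_1(f)=0$ and $t_1(f)\le V$.
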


\begin{proof}
Let $Q$ be the minimizer of the minimization problem described above (whose existence and uniqueness is again guaranteed by 
Proposition \ref{ghk}.). 
We claim that it is enough to show that
\beqn\label{esd}
Q(\bt_i) = w(\bt_i) - (-1)^{i} \frac{h}{2}, \ \mbox{and } Q(\bt_{i+1}) = w(\bt_{i+1}) + (-1)^{i} \frac{h}{2}.
\eeqn
In order to see this, we first note that those identities will directly imply that $Q$ coincides with the unique element in 
\beqnn
\Ar\{ \int_{\bt_i}^{\bt_{i+1}} \ \bar c(\vp'(u)) \ du \ : \ \vp\in \ \mbox{AC}(I_i),\\
 \vp(\bt_i) = w(\bt_i) - (-1)^i \frac{h}{2} \ , \  \vp(\bt_{i+1}) = w(\bt_{i+1}) + (-1)^i \frac{h}{2},   \ || w-  \vp ||_{\infty, I_i} \leq \frac{h}{2} \}.
\eeqnn
By Proposition \ref{prop-der} in the Appendix, this implies that $Q$ is also the unique minimizer of the minimization problem obtained
by replacing $\bar c$ by any strictly convex function $c$ in the latter expression. In particular, it is solution of the following minimization problem:
\beqnn
\Min\{ \int_{\bt_i}^{\bt_{i+1}} \ |\vp'(u)|^2 \ du \ : \ \vp\in \ \mbox{AC}(I_i),
\\ \vp(\bt_i) = w(\bt_i) - (-1)^i \frac{h}{2} \ , \  \vp(\bt_{i+1}) = w(\bt_{i+1}) + (-1)^i \frac{h}{2},   \ || w-  \vp ||_{\infty, I_i} \leq \frac{h}{2} \},
\eeqnn
which yields $\psi_i = Q$. 

It remains to show the two identities in (\ref{esd}).
We start by showing the first equality. Define $f(u)=(-1)^{i+1} \ w(u+\bt_i)$ for $u\in[0, \bt_{i+1}-\bt_i]$.
In the following, we will use the following notation
$$
\theta_i = t_i(f), \ \ \bar\theta_i = \bt_i(f)
$$
(whereas we recall that the $\bt_i$'s are defined with respect to the function $w$, i.e., $\bt_i\equiv\bt_i(w)$).
Note that under those notations, we have $\bar \theta_1=0$ and $\bar\theta_2 = \bt_{i+1}-\bt_i$.
Let us now consider $\Psi$ the unique minimizer of
$$
\Min\{ \int_0^{\bar\theta_2} \ \bar c(\vp'(u)) \ du \ : \ \vp\in \ \mbox{AC}([0,\bar \theta_2]),   || f-  \vp ||_{\infty,[0, \bar \theta_2]} \leq \frac{h}{2}  \}.
$$
and let $\tilde \Psi(u)=(-1)^{i+1} \Psi(u-\bt_i)$ for every $u\in[\bt_i, \bt_{i+1}]$. Using the fact that $\bar c$ is even, one can readily check from the definition that 
$\tilde \Psi$ belongs to
$$
\mbox{Argmin}\{ \int_{\bt_i}^{\bt_{i+1}} \ \bar c(\vp'(u)) \ du \ : \ \vp\in \ \mbox{AC}([\bt_i,\bt_{i+1}|]),   || w-  \vp ||_{\infty,[\bt_i,\bt_{i+1}]} \leq \frac{h}{2} \}
$$
and is therefore the unique minimizer of the corresponding variational problem.
As a consequence, it
coincides with
$Q$. On the other hand, since $\bar\theta_2>\theta_1$ and $\bar \theta_1=0$, the previous lemma implies
$
\Psi(0)=f(0)+\frac{h}{2}
$
and thus
$$
Q(\bt_i) \ = \ (-1)^{i+1} \Psi(0) \ = \ w(\bt_i) + (-1)^{i+1}\frac{h}{2}.
$$

Let us now turn to the second equality. Again, the idea is to use the previous lemma on a suitably chosen function.
Define $g(u)=(-1)^i w(\bt_{i+1}-u)$ on $[0,\bt_{i+1}-\bt_{i}]$. Finally, define
$\tau_i = t_i(g)$ and $\bar \tau_{i} \ = \ \bt_{i}(g)$. By construction, $\bar \tau_1=0$
and it is straightforward to check that $\bt_{i+1}-\bt_{i}\geq \tau_1$. Thus, denoting by $\hat \Psi$ the unique solution
of the minimization problem
$$
\Min\{ \int_0^{\bar t_{i+1}-\bar t_{i}} \ \bar c(\vp'(u)) \ du \ : \ \vp\in \ \mbox{AC}([0,\bar t_{i+1}-\bar t_{i}]),   || g-  \vp ||_{\infty,[0,\bar t_{i+1}-\bar t_{i}]} \leq \frac{h}{2}  \},
$$
we must have $\hat \Psi(0)=g(0)+\frac{h}{2}$. On the other hand,
the function $(-1)^{i}\hat \Psi(\bt_{i+1}-u)$ on the interval $[\bt_{i},\bt_{i+1}]$
coincides with $Q$ (by the same argument as above). Combining this fact with $\hat \Psi(0)=g(0)+\frac{h}{2}$
yields the desired result.

\end{proof}

\begin{proof}[Proof of Theorem \ref{main}.]
 Let us consider $\Phi$ defined on 
$[\bt_1,\infty)$ that coincides with $\psi_i$
on the interval $[\bt_i,\bt_{i+1})$ for $i\geq1$.
We claim that the function $\Phi$ is a solution of the following free-boundary minimization problem:
$$
 \Min\{ \int_{\bt_1}^{\bt_{N(T)}} \ |\vp'(u)|^2 \ du \ : \ \vp\in \ \mbox{AC}([0,T]),\ || w-  \vp ||_{\infty,[0,T]} \leq \frac{h}{2}  \}.
$$
for every $T$ such that  ${N(T)}\geq1$.
First, the condition
$|| w-  \Phi ||_{\infty,[0,T]}\leq h/2$ is obviously satisfied.
Further, $\Phi$ is absolutely continuous on $[\bt_1,\infty]$ because of 
the boundary conditions imposed on the $\psi$'s.
Furthermore,
for every $\vp\in AC([\bt_1,\bt_{N(T)}])$:
\beqnn
\int_{\bt_1}^{\bt_{N(T)}} |\vp'(u)|^2 du & = &  \sum_{i=1}^{{N(T)-1}} \int_{\bt_i}^{\bt_{i+1}} |\vp'(u)|^2 du \  \\
				 & \geq &   \sum_{i=1}^{{N(T)-1}} \int_{\bt_i}^{\bt_{i+1}} |\psi_i'(u)|^2 du \\
				  & = & \int_{\bt_1}^{\bt_{N(T)}} |\Phi'(u)|^2 du
\eeqnn
where the inequality follows from Corollary \ref{lem:dec} with $\bar c(x)=x^2$. Hence, $\Phi$ is a solution of the free-boundary minimization problem described above.

Next, we claim  that for every interval $[\bt_i, \bt_{i+1}], 1 \leq i \leq N(T)-1$, there must exist 
at least one $v_i\in[\bt_i, \bt_{i+1}]$ 
such that 
$\eta_T (v_i) \ = \ \Phi (v_i)$.
Let us first assume that $i$ is even. Since at $\bt_i$ (resp., $\bt_{i+1}$), the function $\psi_i$ is at the lower (resp., upper) boundary of the tube
of width $h$ surrounding $w$, we must have
$$
\eta_T(\bt_i) - \Phi(\bt_{i}) \geq 0, \ \ \mbox{and } \ \eta_T(\bt_{i+1}) -  \Phi(\bt_{i}) \leq 0.
$$
and the previous claim flows in the case where $i$ is even.
The odd case can be treated along the same lines.

Next, recall that the taut string $\eta_T$
is the unique solution of the minimization problem:
$$
\Min\{ \int_0^T \ |\vp'(u)|^2 \ du \ : \ \vp\in \ \mbox{AC}([0,T]),\ \vp(0)=w(0) \ , \vp(T)  = w(T) ,   || w-  \vp ||_{\infty,[0,T]} \leq \frac{h}{2}  \}. \nonumber\\
$$
Thus, the restriction of $\eta_T$ and  $\Phi$  on the interval $[v_1, v_{N(T)-1}]$
must be solution of the following minimization problem :
\beqnn
\Min\{ \int_{v_1}^{v_{N(T)-1}} \ |\vp'(u)|^2 \ du \ : \ \vp\in \ \mbox{AC}([v_1, v_{N(T)-1}]),\  \\ 
\vp(v_1)=\eta_{T}(v_1)=\Phi(v_1) \ ,  \ \  \vp(v_{N(T)-1})= \eta_T(v_{N(T)-1})=\Phi(v_{N(T)-1})  ,
\\   || w-  \vp ||_{\infty,[v_1,v_{N(T)-1}]} \leq \frac{h}{2}  \}.
\eeqnn
Since there is a unique solution to the latter minimization problem (again by Proposition \ref{ghk}), it follows that $\eta_T$ and $\Phi$
coincide on the interval $[v_1, v_{N(T)-1}]$. Finally, since $[\bt_2, \bt_{N(T)-1}] \ \subset \ [v_1, v_{N(T)-1}]$, 
this ends the proof of Theorem \ref{main0}.

\end{proof}


\section{Proof of Theorem \ref{main0}}

In this section, we give an outline of the proof of Theorem \ref{main0}, and postpone 
technical details to later sections. 
First, it is sufficient 
to show a weak version of our L.L.N.. Indeed, one can extend cour onvergence in probability statement to convergence a.s.  statement 
by using the same argument presented in Theorem 1.2. in \cite{LS15}.

When $N(T)\geq4$, Theorem \ref{main} implies that 
\begin{eqnarray*}
\int_0^T c(\eta_T'(u)) du \ = \ \int_{0}^{\bt_2} c(\eta_T'(u)) du   \ + \ \sum_{i=2}^{N(T)-2} \int_{\bt_i}^{\bt_{i+1}} c(\psi_i'(u)) du \ + \ \int_{\bt_{N(T)-1}}^T c(\eta_T'(u)) du.
\end{eqnarray*}
Let us define
$$
Y_i \ = \ \int_{\bt_{2i}}^{\bt_{2i+1}} c(\psi_{2i}'(u)) du +  \int_{\bt_{2i+1}}^{\bt_{2i+2}} c(\psi_{2i+1}'(u)) du
$$
and let ${\mathcal N}(T) = \sup\{n \ : \ \bt_{2n} \leq T\}$. 
Again assuming that $N(T)\geq4$, a little bit of algebra yields
$$
\int_0^T c(\eta_T'(u)) du \ = \  \sum_{i=1}^{\Net(T)-1} Y_i + R(T), \
$$
with 
\begin{eqnarray} R(T) & = & \int_{0}^{\bt_2} c(\eta_T'(u)) du \ + \ \int_{\bt_{N(t)-1}}^T  c(\eta_T'(u)) du \nonumber \\
&-& 1_{\{N(T) \mbox{ is even} \} } \int_{\bt_{N(T)-1}}^{\bt_{N(T)}}  c(\psi_{N(T)-1}'(u)) du. \label{R(T)}
\end{eqnarray}

In Section \ref{s:moments}, we show that $\{R(T)\}_{T\geq0}$ is tight  -- see Corollary \ref{cor:tightness} -- which implies that
$R(T)/\sqrt{T}$ and $R(T)/T$
converges to $0$ in probability as $T\rightarrow \infty$.
As a consequence, 
we only need to show the L.L.N. and the C.L.T. for 
the quantity
$
\sum_{i=1}^{\Net(T)-1} Y_i.
$
We start with the L.L.N. (second item of Theorem \ref{main0}). Write
$$
\frac{1}{T}\sum_{i=1}^{\Net(T)-1} Y_i. \ \ = \  \left(\frac{1}{\Net(T)}\sum_{i=1}^{\Net(T)-1} Y_i \right) \ \frac{\Net(T)}{T}.
$$
First,
$\{\bt_{2i+2}-\bt_{2i}\}_{i\geq1}$ is a sequence of i.i.d. random variables, and 
in Corollary \ref{cor:1}, we shall prove that its elements have all finite moments.
A standard renewal theorem implies that $ \frac{\Net(T)}{T}$ converges a.s. to ${1}/{E(\bt_4-\bt_2)}$.

Secondly, the sequence $\{(-1)^i(w(u+\bt_i)-w(\bt_i), \ u\in[0,\bt_{i+1}-\bt_i]\}_{i\geq1}$ is a sequence of i.i.d. random variables. This implies 
that $\{Y_i\}_{i\geq1}$ is also made of i.i.d. random variables.
Finally, we shall prove later that 
$Y_i$ has all finite moments -- see again Corollary \ref{cor:1} below. 
Our L.L.N. then follows by
a direct application of the strong L.L.N. and by noting that ${\mathcal N}(T)$ goes to $\infty$ as $T\rightarrow\infty$.

\bigskip

In order to prove our C.L.T., we will need to prove a result on renewal processes that we now expose. Let $\{(X_i,\tau_i)\}$ be an i.i.d. sequence 
of (possibly correlated) pairs of non-negative random variables with respective
finite non-zero expected value $\bar \tau$ and $\bar X$, 
finite and non-zero standard deviation $\sigma_\tau$ and $\sigma_X$ and covariance $\sigma_{X,\tau}$.
Define
$$
S_n \ = \ \sum_{k=1}^n \tau_k, \ \mbox{and} \ \ U_n \ = \  \sum_{k=1}^n X_k.
$$
and
$
\tilde{\mathcal N}(t) \ = \ \sup\{n\geq0 \ : \ S_n \leq t\}.
$
The following result is an extension of Anscombe Theorem \cite{A52}.

\begin{prop}\label{CTLA}
$$
\lim_{t\rightarrow\infty} \ \frac{\left( \ U_{\tilde{\mathcal N}(t)-1}  - t \bar X/ \bar \tau\ \right)}{\sqrt{t \bar \sigma^2}} \ = \ {\mathcal Z} \ \ \mbox{in distribution},
$$
where ${\mathcal Z}$ is a standard normal random variable and 
$$
\bar \sigma^2 \ = \ \frac{\bX^2}{\bT^3} \sigma_\tau^2 \ + \ \frac{\sigma_X^2}{\bT} - 2 \sigma_{X,\tau} \frac{\bX}{\bT^2}
$$
\end{prop}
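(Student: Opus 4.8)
The plan is to reduce Proposition \ref{CTLA} to a bivariate central limit theorem for the partial sums $(S_n, U_n)$ together with Anscombe's random-index theorem. First I would recall that, since $\{(X_i,\tau_i)\}$ is i.i.d.\ with finite second moments, the bivariate CLT gives
\beqnn
\frac{1}{\sqrt n}\left( U_n - n\bX, \ S_n - n\bT \right) \ \Longrightarrow \ \mathcal{N}(0,\Sigma),
\eeqnn
where $\Sigma$ is the covariance matrix with entries $\sigma_X^2$, $\sigma_\tau^2$ and $\sigma_{X,\tau}$. The relevant linear combination is $U_n - (\bX/\bT) S_n$, whose variance per step is
$
v \ = \ \sigma_X^2 - 2(\bX/\bT)\sigma_{X,\tau} + (\bX/\bT)^2 \sigma_\tau^2.
$
One checks directly that $\bar\sigma^2 = v/\bT$, so the statement is really a CLT for the ``centered'' process $U_n - (\bX/\bT)S_n$ evaluated at the renewal time $\tilde{\mathcal N}(t)$.

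Next I would handle the random index. By the strong law of large numbers $S_n/n \to \bT$ a.s., so the renewal counting function satisfies $\tilde{\mathcal N}(t)/t \to 1/\bT$ a.s.; in particular $\tilde{\mathcal N}(t)\to\infty$ a.s.\ and $\tilde{\mathcal N}(t)-1 \sim t/\bT$. Applying Anscombe's theorem to the i.i.d.\ sequence $Z_i := X_i - (\bX/\bT)\tau_i$ (which is centered with variance $v$), and using that $\tilde{\mathcal N}(t)-1$ is a valid Anscombe-admissible random index (it is $o(t)$-close to the deterministic sequence $t/\bT$, and the Anscombe uniform-continuity-in-probability condition holds for i.i.d.\ summands), yields
\beqnn
\frac{1}{\sqrt{t/\bT}}\left( U_{\tilde{\mathcal N}(t)-1} - \frac{\bX}{\bT} S_{\tilde{\mathcal N}(t)-1} \right) \ \Longrightarrow \ \mathcal{N}(0, v).
\eeqnn

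It then remains to replace $S_{\tilde{\mathcal N}(t)-1}$ by $t$ at cost $o(\sqrt t)$ in probability. Here I would use the standard renewal bound: $0 \le t - S_{\tilde{\mathcal N}(t)} \le \tau_{\tilde{\mathcal N}(t)+1}$ and $S_{\tilde{\mathcal N}(t)} - S_{\tilde{\mathcal N}(t)-1} = \tau_{\tilde{\mathcal N}(t)}$, so $|t - S_{\tilde{\mathcal N}(t)-1}| \le \tau_{\tilde{\mathcal N}(t)} + \tau_{\tilde{\mathcal N}(t)+1}$, and a maximal/moment argument (finite variance of $\tau_i$ suffices for $\max_{k\le Cn}\tau_k = o(\sqrt n)$ in probability) shows this is $o(\sqrt t)$ in probability. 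Consequently $U_{\tilde{\mathcal N}(t)-1} - t\bX/\bT = \big(U_{\tilde{\mathcal N}(t)-1} - (\bX/\bT)S_{\tilde{\mathcal N}(t)-1}\big) - (\bX/\bT)\big(t - S_{\tilde{\mathcal N}(t)-1}\big)$, the second term being negligible, and dividing by $\sqrt{t\bar\sigma^2} = \sqrt{tv/\bT}$ gives the claim by Slutsky.

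The main obstacle is the careful justification of the random-index step: one must verify the Anscombe condition (asymptotic negligibility of the maximal fluctuation of the partial sums over a window of size $o(n)$ around $n = t/\bT$), and control the overshoot/undershoot term $t - S_{\tilde{\mathcal N}(t)-1}$ uniformly enough to conclude it is $o(\sqrt t)$ — both are standard for i.i.d.\ summands with finite variance but need to be spelled out, and this is presumably what Section \ref{proof:CLTA} is devoted to. Everything else (the bivariate CLT, the algebraic identification of $\bar\sigma^2$, the Slutsky patching) is routine.
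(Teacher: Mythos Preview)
Your proof is correct and takes a genuinely different route from the paper. The paper decomposes
\[
U_{\tilde{\mathcal N}(t)-1} - t\,\bX/\bT \;=\; \bigl(U_{\tilde{\mathcal N}(t)-1} - U_{[t/\bT]}\bigr) \;+\; \bigl(U_{[t/\bT]} - t\,\bX/\bT\bigr),
\]
then proves two lemmas: a Kolmogorov-inequality argument shows the first bracket equals $\bX\bigl(\tilde{\mathcal N}(t)-[t/\bT]\bigr)+o_P(\sqrt t)$, and a separate (fairly involved) joint CLT shows that the pair $\bigl(\tilde{\mathcal N}(t)-t/\bT,\; U_{[t/\bT]}-t\bX/\bT\bigr)$, suitably normalized, converges to a correlated bivariate Gaussian. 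The proposition then follows by taking the appropriate linear combination.

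Your decomposition is instead
\[
U_{\tilde{\mathcal N}(t)-1} - t\,\bX/\bT \;=\; \Bigl(U_{\tilde{\mathcal N}(t)-1} - \tfrac{\bX}{\bT}\,S_{\tilde{\mathcal N}(t)-1}\Bigr) \;-\; \tfrac{\bX}{\bT}\bigl(t - S_{\tilde{\mathcal N}(t)-1}\bigr),
\]
which lets you apply Anscombe's theorem once to the centered i.i.d.\ sequence $Z_i = X_i - (\bX/\bT)\tau_i$ and dispose of the overshoot $t - S_{\tilde{\mathcal N}(t)-1}\le \tau_{\tilde{\mathcal N}(t)}+\tau_{\tilde{\mathcal N}(t)+1}=o_P(\sqrt t)$ by the elementary second-moment bound you indicate. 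This is shorter and more transparent: it avoids the joint CLT for $(\tilde{\mathcal N}(t),U_{[t/\bT]})$ altogether, and the algebra $\bar\sigma^2 = v/\bT$ drops out immediately. The paper's approach, on the other hand, yields as a by-product the joint limiting law of the counting process and the partial sums (Lemma~\ref{renew1}), which carries more information than what is strictly needed here.
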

Taking $X_i=\int_{\bt_{2i}}^{\bt_{2i+1}} c(\psi^{'}_{2i}(u)) du + \int_{\bt_{2i+1}}^{\bt_{2i+2}} c(\psi^{'}_{2i+1}(u)) du $ and $\tau_i = \bt_{2i+2}-\bt_{2i}$ in the latter proposition, yields the third part of Theorem \ref{main0}.

\section{Moment Estimates and Tightness.}\label{s:moments}

\subsection{Moments of ${\mathcal E}$ and $\tau$.} We start with some preliminary work. Define
$\sigma_0=0$ and for $n\geq1$,  
$$
\sigma_n \ = \ \inf\{t\geq \sigma_{n-1}\ : \ |w(t)-w(\sigma_{n-1})|=h/4 \}.
$$
By the strong Markov property, we note that $(\sigma_n-\sigma_{n-1}; \ n\geq1)$ is a sequence 
of i.i.d. random variables.

\begin{lemma}\label{moments}
For every $p\in\mathbb{Z}$, $E((\sigma_1)^p)<\infty$.
\end{lemma}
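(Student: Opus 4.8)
The plan is to prove that $\sigma_1$, the first exit time of Brownian motion from the interval $(-h/4, h/4)$ started at $0$, has finite moments of every (positive and negative) integer order. The statement covers $p \in \mathbb{Z}$, so I must handle negative $p$ (small moments, i.e. behaviour near $0$) as well as positive $p$ (tail behaviour).

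For the positive moments, the standard approach is to bound the tail $P(\sigma_1 > t)$. First I would note that $\sigma_1 = \inf\{t : |w(t)| = h/4\}$, and use the classical estimate $P(\sigma_1 > t) \leq C e^{-\lambda t}$ for suitable constants $C, \lambda > 0$ depending on $h$; this exponential decay can be obtained either from the reflection principle / explicit series formula for the exit time density, or more softly from a Markov-chain argument: split $[0,t]$ into $\lfloor t \rfloor$ unit intervals and observe that on each interval, conditionally on not having exited, there is a fixed positive probability that the Brownian increment over that interval has oscillation exceeding $h/2$, forcing an exit; hence $P(\sigma_1 > t) \leq (1-\delta)^{\lfloor t \rfloor}$. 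Exponential tails immediately give $E(\sigma_1^p) = p\int_0^\infty t^{p-1} P(\sigma_1 > t)\, dt < \infty$ for every $p \geq 1$, and trivially for $0 \leq p < 1$ as well.

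For the negative moments, I need control of $P(\sigma_1 < \epsilon)$ as $\epsilon \downarrow 0$, i.e. I must show $P(\sigma_1 \leq \epsilon)$ decays fast enough that $E(\sigma_1^{-|p|}) = \int_0^\infty P(\sigma_1^{-|p|} > s)\, ds < \infty$. By the reflection principle, $P(\sigma_1 \leq \epsilon) = P(\sup_{[0,\epsilon]} |w| \geq h/4) \leq 2 P(\sup_{[0,\epsilon]} w \geq h/4) = 4 P(w(\epsilon) \geq h/4) = 4 P(|w(1)| \geq h/(4\sqrt{\epsilon}))$, which by the Gaussian tail bound is at most $C \exp(-h^2/(32\epsilon))$. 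This super-polynomial (indeed super-exponential in $1/\epsilon$) decay near $0$ dominates any negative power: writing $E(\sigma_1^{-m}) = m \int_0^\infty r^{m-1} P(\sigma_1 < 1/r)\, dr \leq m\int_0^\infty r^{m-1} \cdot C\exp(-h^2 r/32)\, dr < \infty$ for any $m \geq 1$. Combining the two regimes, together with finiteness of $E(\sigma_1^0) = 1$, gives the result for all $p \in \mathbb{Z}$.

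I do not anticipate a serious obstacle here — this is a routine Brownian exit-time estimate — but if I had to name the most delicate point it is simply making sure the two-sided control (both tails) is assembled cleanly, since the lemma asserts \emph{all} integer moments rather than just positive ones; the negative-moment bound via the reflection principle and Gaussian tail is the part most easily overlooked. An alternative to the soft Markov-chain tail argument would be to invoke the explicit formula $E(e^{\theta \sigma_1}) = 1/\cos(h\sqrt{2\theta}/4)$, valid for small $\theta > 0$, which gives finiteness of all positive moments at once; I would likely mention this as the cleaner route and use it if the ambient text already has the relevant identity available.
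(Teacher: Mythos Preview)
Your proposal is correct and follows essentially the same route as the paper: the paper dismisses positive moments as ``well known'' and, for negative moments, writes $E(\sigma_1^{-n}) = n\int_0^\infty x^{n-1} P(\sigma_1 < 1/x)\,dx$ and bounds $P(\sigma_1 < y)$ via Brownian scaling by $2P(\max_{[0,1]} w \geq h/(4\sqrt{y}))$, then invokes the Gaussian large-deviation tail --- exactly your reflection-principle/Gaussian-tail argument. (One inconsequential slip: $4P(w(\epsilon)\geq h/4)$ equals $4P(w(1)\geq h/(4\sqrt{\epsilon}))$, not $4P(|w(1)|\geq h/(4\sqrt{\epsilon}))$; the bound and conclusion are unaffected.)
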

\begin{proof}
The case $p\in{\mathbb N}$ is well known. Let us focus instead on the case where $p<0$, i.e., for $n\in{\mathbb N}$,
let us prove that $E((1/\sigma_1)^n)<\infty$. First, 
\beqnn
E(1/(\sigma_1)^n) & = & n \int_{0}^\infty x^{n-1} P(1/\sigma_1>x) dx \\
		   & = &  n \int_{0}^\infty x^{n-1} P(\sigma_1<\frac{1}{x}) dx. 
\eeqnn
We then need to estimate the asymptotic behavior of $P(\sigma_1<y)$
when $y$ goes to $0$. Momentarily, we make the  dependence in $h$ explicit in the notations by writing $\sigma_n=\sigma_n^{h}$.
By Brownian scaling and symmetry, 
\beqnn
P(\sigma_1^h < y) & = & P(\sigma_{1}^{h/\sqrt{y}} <1) \\
			     & \leq & 2 P( \max_{[0,1]} w \  \geq \  \frac{h}{4\sqrt{y}}). 
\eeqnn
By standard large deviation estimates, 
$$
\lim_{y\rightarrow0} {16y/h^2} \ \log P( \max_{[0,1]} w \geq \frac{h}{4\sqrt{y}}) = - \frac{1}{2}.
$$
It follows that $P(\sigma_1^h<y)$ decreases exponentially fast to $0$ as $y\rightarrow0$, and thus that $E(1/(\sigma_1^h)^n)$ is finite.
\end{proof}

\bigskip

For $n\geq1$, define
$$
\Delta_n \ = \ \frac{4}{h} (w(\sigma_n)-w(\sigma_{n-1})).
$$
$\{\Delta_n, n\geq1\}$ is a sequence of i.i.d. random variables, independent of the $\sigma_n$'s, and 
$
P(\Delta_{n}=\pm1)=1/2.
$
From the sequence $\{\Delta_n\}_{n\geq1}$, define a sequence of integer
$\{n_k\}_{k\geq0}$ as follows: $n_0=0$ and 
$$
n_k \ = \ \inf\{i \geq n_{k-1}: \ \Delta_{4i}  = \Delta_{4i-1} =  \Delta_{4i-2} = \Delta_{4i-3}= (-1)^{k+1} \}.
$$
Note that  if $\Delta_{4n_k}=1$ (resp., $\Delta_{4n_k}=-1$), the path $w$ experiences 
an upcrossing (resp., downcrossing) of size $h$  on the interval $[\sigma_{4(n_{k}-1)},\sigma_{4n_k}]$.
In particular, on the interval $[0,\sigma_{4n_2}]$, the path $w$
must have experienced an upcrossing and then a downcrossing of size larger or
equal to $h$. 
See Fig. 2. This motivates the following lemma.

\begin{figure}\label{taut3}
\includegraphics[scale=0.3]{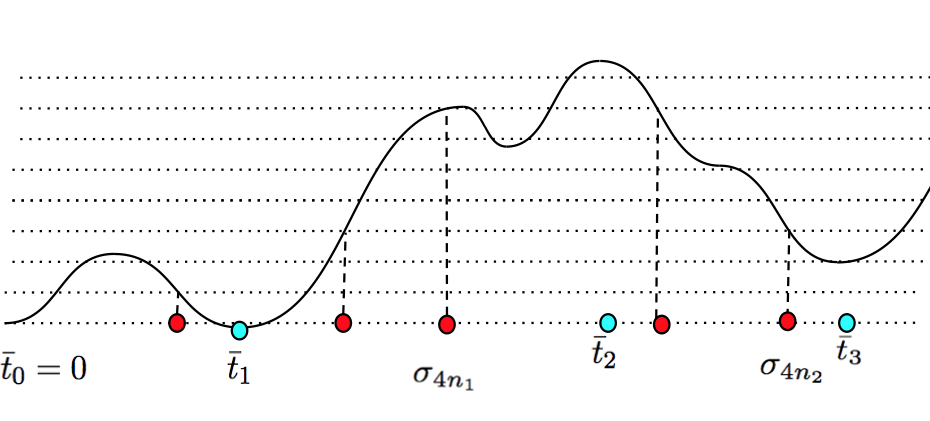}
\caption{Red points represent time points of the form $\sigma_{4k}$.
In this example, $n_1=3$ and $n_2=5$.}
\end{figure}

\begin{lemma}\label{deltat}
$$
\bt_2-\bt_1 \leq \sigma_{4 n_{2}}.
$$
\end{lemma}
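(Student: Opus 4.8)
The plan is to reduce the statement to the single inequality $t_2 \le \sigma_{4n_2}$, where $t_1,t_2,\bar t_1,\bar t_2$ are the times attached to $w$ in the Neveu--Pitman construction. Since $\bar t_1\in[t_0,t_1]=[0,t_1]$ we have $\bar t_1\ge 0$, and since $\bar t_2$ is a supremum over a subset of $[t_1,t_2]$ we have $\bar t_2\le t_2$; hence $\bar t_2-\bar t_1\le \bar t_2\le t_2$, and it suffices to prove $t_2\le\sigma_{4n_2}$.

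The first step is to locate $t_1$. From $\Delta_n=\frac 4h\big(w(\sigma_n)-w(\sigma_{n-1})\big)$ and the defining property of $n_1$, namely $\Delta_{4n_1-3}=\Delta_{4n_1-2}=\Delta_{4n_1-1}=\Delta_{4n_1}=1$, telescoping the four increments over $[\sigma_{4n_1-4},\sigma_{4n_1}]$ gives $w(\sigma_{4n_1})-w(\sigma_{4n_1-4})=\frac h4\sum_{j=4n_1-3}^{4n_1}\Delta_j=h$. Since $\sigma_{4n_1-4}\in[0,\sigma_{4n_1}]$, this yields $\inf_{[0,\sigma_{4n_1}]}w\le w(\sigma_{4n_1-4})=w(\sigma_{4n_1})-h$, so $w(\sigma_{4n_1})-\inf_{[0,\sigma_{4n_1}]}w\ge h$. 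As $t\mapsto w(t)-\inf_{[0,t]}w$ is continuous and vanishes at $t=0$, it attains the value $h$ somewhere in $[0,\sigma_{4n_1}]$, whence $t_1\le\sigma_{4n_1}$.

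The second step is the symmetric statement for $t_2$, preceded by a small amount of index bookkeeping. Because at $i=n_1$ the four relevant increments all equal $+1\neq -1$, the definition of $n_2$ forces $n_2\ge n_1+1$, hence $4n_1\le 4n_2-4$ and therefore $\sigma_{4n_1}\le\sigma_{4n_2-4}$; combined with Step~1 this gives $t_1\le\sigma_{4n_2-4}$, i.e. the ``upcrossing window'' $[\sigma_{4n_1-4},\sigma_{4n_1}]$ precedes the ``downcrossing window'' $[\sigma_{4n_2-4},\sigma_{4n_2}]$ and $t_1$ itself lies before the latter. Telescoping the four increments over $[\sigma_{4n_2-4},\sigma_{4n_2}]$, on which $\Delta=-1$, gives $w(\sigma_{4n_2})=w(\sigma_{4n_2-4})-h$. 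Since $\sigma_{4n_2-4}\in[t_1,\sigma_{4n_2}]$, we get $\sup_{[t_1,\sigma_{4n_2}]}w\ge w(\sigma_{4n_2-4})=w(\sigma_{4n_2})+h$, i.e. $\sup_{[t_1,\sigma_{4n_2}]}w-w(\sigma_{4n_2})\ge h$. As $t\mapsto \sup_{[t_1,t]}w-w(t)$ is continuous and vanishes at $t=t_1$, it attains the value $h$ in $[t_1,\sigma_{4n_2}]$, so $t_2\le\sigma_{4n_2}$, completing the argument.

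I do not expect a serious obstacle: the only points needing care are the index bookkeeping guaranteeing $n_1<n_2$ (so that the up- and down-crossing windows are disjoint and correctly ordered, and $t_1$ falls before the down-crossing window) and the two elementary intermediate-value arguments passing from ``$w$ has risen, resp. fallen, by $h$ by time $\sigma_{4n_1}$, resp. $\sigma_{4n_2}$'' to the bounds $t_1\le\sigma_{4n_1}$ and $t_2\le\sigma_{4n_2}$. Note also that $n_1,n_2<\infty$ almost surely, since the i.i.d.\ sign sequence $\{\Delta_n\}$ a.s.\ contains infinitely many blocks of four equal consecutive signs of each type.
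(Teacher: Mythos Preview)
Your proof is correct. The reduction to $t_2\le\sigma_{4n_2}$ via $\bar t_1\ge 0$ and $\bar t_2\le t_2$ is clean, and both intermediate-value arguments are sound: in Step~1 you obtain $w(\sigma_{4n_1})-\inf_{[0,\sigma_{4n_1}]}w\ge h$ exactly as the paper does, and in Step~2 the key bookkeeping $n_2\ge n_1+1$ (so $t_1\le\sigma_{4n_1}\le\sigma_{4n_2-4}$) correctly places the down-crossing window entirely after $t_1$, whence $\sup_{[t_1,\sigma_{4n_2}]}w-w(\sigma_{4n_2})\ge h$ and $t_2\le\sigma_{4n_2}$.

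The paper takes a somewhat different route for the second half. Rather than bounding $t_2$ directly, it lets $i_k$ be the index with $\sigma_{4n_k}\in[\bar t_{i_k},\bar t_{i_k+1})$ and proves a parity obstruction: if $\Delta_{4n_k}=-1$ then $\sigma_{4n_k}$ cannot lie in an interval $[\bar t_i,\bar t_{i+1})$ with $i$ odd, and symmetrically for $+1$ and $i$ even. Since $\Delta_{4n_1}=+1\ne\Delta_{4n_2}=-1$, this forces $i_1\ne i_2$, hence $i_2\ge 2$ and $\bar t_2\le\sigma_{4n_2}$. Your argument is shorter and more elementary for the lemma as stated. The paper's parity argument, however, yields the slightly stronger conclusion that the points $\sigma_{4n_k}$ fall into \emph{distinct} Neveu--Pitman intervals, a fact the paper reuses later (in the tightness proof for $R(T)$) when handling several such points at once.
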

\begin{proof}
Let $i_k$ be the index such that 
$ \sigma_{4n_k}\in[\bt_{i_k},\bt_{i_k+1})$. It is sufficient to show that 
(1) $i_{1}\geq 1$, and (2)
$i_{1} \neq i_{2}$.

Let us first deal with (1). By definition of $n_1$, we must have $w(\sigma_{4n_{1}})-\inf_{[0,\sigma_{4n_{1}}]} w \geq h$, which implies that $t_1\leq \sigma_{4 n_{1}}$.
Since $\bt_1 \leq   t_{1}$, we get that $\bt_1\leq \sigma_{4 n_{1}}$ and thus $i_{1}\geq1$.


We now proceed with (2). We claim that if $k$ is such that $\Delta_{4n_k}=-1$, then $\sigma_{4 n_k}$ does not belong to $[\bt_i,\bt_{i+1})$ for every odd integer 
$i$. 
Recall that if $i$ is odd
$$
t_{i+1} = \inf\{u \geq t_{i} \ : \ \sup_{[t_i,u]} w - w(u) = h \}, \  \mbox{and} \ \bt_{i+1}= \sup\{u \geq t_{i} \ : \ \sup_{[t_i,u]} w = w(u)  \}
$$
On the one hand, it is easy to check that $w$ attains its only minimum at $\bt_i$ on the interval $[\bt_{i}, \bt_{i+1}]$. On the other hand,
on the interval $[\sigma_{4(n_k-1)}, \sigma_{4 n_k}]$, $w$ attains its minimum at $\sigma_{4 n_k}$ since $\Delta_{\sigma_{4n_k}} = \cdots = \Delta_{\sigma_{4n_k-3}}=-1$.
Thus, if $\sigma_{4n_k}\in[\bt_{i},\bt_{i+1}]$, we would have $\sigma_{4(n_k-1)}\geq \bt_i$ and 
\beqnn
\sup_{[t_i,\sigma_{4n_k}]} w - w(\sigma_{4 n_k})  &  = &  \sup_{[\bt_i,\sigma_{4n_k}]} w - w(\sigma_{4 n_k})  \\
                                                                                 & \geq  &  \sup_{[\sigma_{4 (n_k-1)},\sigma_{4n_k}]} w   - w(\sigma_{4 n_k})   = h,
\eeqnn
where the first equality follows from the fact that $\sup_{[\bt_i, t_i]} w = w(t_i)$ since $i$ is odd. 
The latter inequality would  imply that $\sigma_{4n_k} \ \geq \  t_{i+1} \ > \bt_{i+1}$, thus yielding a contradiction.

By a symmetric argument, if $k$ is such that $\Delta_{4n_k}=1$, then $\sigma_{4 n_k}\notin[\bt_i,\bt_{i+1}]$ when $i$ even. By definition of $n_{1}$ and
$n_{2}$,
$\Delta_{\sigma_{4n_{1}}}\neq \Delta_{\sigma_{4 n_{2}}}$ and thus,
$\sigma_{4n_{1}}$ and $\sigma_{4n_{2}}$ can not belong to the same interval $[\bt_i,\bt_{i+1}]$. This achieves the proof of claim (2) made earlier, and the proof of our lemma. 
\end{proof}

\begin{corollary}\label{cor:1}
The random variables 
$$
\tau= \bt_4-\bt_2 \ \ \mbox{and} \  \ {\mathcal E} = \int_{\bt_2}^{\bt_3} c(\psi_2'(u)) du \ + \ \int_{\bt_3}^{\bt_4} c(\psi_3'(u)) du
$$
have all finite moments.
\end{corollary}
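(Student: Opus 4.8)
The plan is to dominate both quantities by functionals of the i.i.d.\ $h/4$-crossing increments $\sigma_j-\sigma_{j-1}$, all of whose positive \emph{and} negative moments are finite by Lemma~\ref{moments}. Since $c$ is locally bounded and $c(x)/x^{\alpha}\to 0$, there is a constant $C<\infty$ with $|c(x)|\le C(1+|x|^{\beta})$ for every $x$, where $\beta:=\max(2,\alpha)$; note $\beta>1$, so $x\mapsto|x|^{\beta}$ is even and strictly convex. Hence
\[
|{\mathcal E}|\ \le\ C\,(\bt_4-\bt_2)\ +\ C\int_{\bt_2}^{\bt_3}|\psi_2'(u)|^{\beta}\,du\ +\ C\int_{\bt_3}^{\bt_4}|\psi_3'(u)|^{\beta}\,du ,
\]
and by the triangle inequality in $L^{p}$ it is enough to show that $\tau=\bt_4-\bt_2$ (a fortiori $\bt_4$) and each $\int_{\bt_i}^{\bt_{i+1}}|\psi_i'|^{\beta}$ with $i=2,3$ have all finite moments.

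I would first record a mild extension of Lemma~\ref{deltat}: running its proof with the whole sign sequence, the indices $i_1<i_2<\cdots$ defined by $\sigma_{4n_\ell}\in[\bt_{i_\ell},\bt_{i_\ell+1})$ are strictly increasing with $i_1\ge 1$ — consecutive $\sigma_{4n_\ell}$'s are right endpoints of $h$-crossings of opposite sign and so cannot lie in one interval $[\bt_i,\bt_{i+1})$, while $\sigma_{4n_1}<\sigma_{4n_2}<\cdots$ gives $i_1\le i_2\le\cdots$ — so that $\bt_k\le\sigma_{4n_k}$ for every $k$. In particular $\bt_4\le\sigma_{4n_4}=\sum_{j=1}^{4n_4}(\sigma_j-\sigma_{j-1})$, and $[\bt_i,\bt_{i+1}]\subseteq[0,\sigma_{4n_{i+1}}]$ contains at most $4n_{i+1}$ of the times $\sigma_1,\sigma_2,\dots$ Now $n_k$ is a function of the signs $\{\Delta_m\}$ only, hence independent of the increments $\{\sigma_j-\sigma_{j-1}\}$, and it has exponential tails because the disjoint blocks $(\Delta_{4i-3},\dots,\Delta_{4i})$ are i.i.d.\ and each equals the prescribed monochromatic pattern with probability $1/16$, so $n_k$ is stochastically dominated by a sum of $k$ geometric variables. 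Conditioning on $n_k$ and applying Minkowski to a sum of a fixed number of i.i.d.\ terms — whose $\pm$ powers all have finite moments by Lemma~\ref{moments}, a non-integer exponent being dominated by an integer one — then shows that $\sum_{j=1}^{4n_k}(\sigma_j-\sigma_{j-1})$ and $\sum_{j=1}^{4n_k}(\sigma_j-\sigma_{j-1})^{-(\beta-1)}$ have all finite moments; in particular so do $\tau$ and $\bt_4$.

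It remains to treat $\int_{\bt_i}^{\bt_{i+1}}|\psi_i'|^{\beta}\,du$ for $i=2,3$, and here I would use an explicit competitor. By Corollary~\ref{lem:dec} applied with $\bar c(x)=|x|^{\beta}$, the function $\psi_i$ minimizes $\int_{I_i}|\varphi'|^{\beta}$ over absolutely continuous $\varphi$ with $\|w-\varphi\|_{\infty,I_i}\le h/2$, so its $\beta$-energy is at most that of any such $\varphi$. Taking $\sigma_a\le\bt_i<\sigma_{a+1}$ and $\sigma_b\le\bt_{i+1}<\sigma_{b+1}$, let $\varphi$ equal the constant $w(\sigma_{a+1})$ on $[\bt_i,\sigma_{a+1}]$, the piecewise-linear interpolation of $w$ through $\sigma_{a+1},\dots,\sigma_b$ on $[\sigma_{a+1},\sigma_b]$, and the constant $w(\sigma_b)$ on $[\sigma_b,\bt_{i+1}]$. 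Since $w$ varies by at most $h/4$ on each $[\sigma_j,\sigma_{j+1}]$ and $|w(\sigma_{j+1})-w(\sigma_j)|=h/4$, this $\varphi$ is continuous at its two breakpoints and stays within $h/2$ of $w$; the two constant pieces have zero energy, and the linear piece over $[\sigma_j,\sigma_{j+1}]$ has energy $(h/4)^{\beta}(\sigma_{j+1}-\sigma_j)^{-(\beta-1)}$. As these linear pieces are sub-intervals of $[\bt_i,\bt_{i+1}]\subseteq[0,\sigma_{4n_{i+1}}]$, the previous paragraph yields
\[
\int_{\bt_i}^{\bt_{i+1}}|\psi_i'(u)|^{\beta}\,du\ \le\ (h/4)^{\beta}\sum_{j=1}^{4n_{i+1}}(\sigma_j-\sigma_{j-1})^{-(\beta-1)} ,
\]
which has all finite moments; together with the first paragraph this proves the corollary. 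The one genuinely delicate step is this competitor construction: it has to be explicit enough to integrate $|\cdot|^{\beta}$ against, remain in the tube of full width $h$ although the $\sigma$-grid only controls oscillations of size $h/4$ (hence the flattening of the two incomplete end sub-intervals into constant stubs), and have a number of linear pieces bounded by $n_{i+1}$ — which is precisely what forces the extension of Lemma~\ref{deltat}. Everything else is routine.
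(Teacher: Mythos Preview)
Your proof is correct and follows essentially the same route as the paper: dominate both quantities by sums of the i.i.d.\ $h/4$-crossing increments $(\sigma_j-\sigma_{j-1})^{\pm 1}$, invoke Lemma~\ref{moments} for their moments, and control the random number of terms via the geometric tails of the $n_k$'s. The paper's version is marginally leaner in that it first uses the Neveu--Pitman i.i.d.\ structure to reduce to the single interval $[\bt_1,\bt_2]$ (so Lemma~\ref{deltat} as stated suffices, without your extension $\bt_k\le\sigma_{4n_k}$), and it takes as competitor the global linear interpolation $\phi$ of all the points $(\sigma_k,w(\sigma_k))$, which already lies in the tube everywhere---so your flat end-stubs, while harmless, are not needed.
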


\begin{proof}
First, $\bt_4-\bt_2$ is equal in distribution
to the sum of two independent copies of $\bt_2-\bt_1$. 
Thus, controlling the moments of  $\bt_4-\bt_2$ 
amounts to controlling the moments of $\bt_2-\bt_1$.
Let $p\in{\mathbb N}$. From the previous lemma, we have
\beqnn
E\left(\bt_2-\bt_1\right)^p & \leq & E( \sigma_{4 n_{2}}^p). 
\eeqnn
By the strong Markov property and by symmetry, the random variable $\sigma_{4 n_{2}}$
is equal in distribution
to the sum of $2$ independent copies of $\sigma_{4n_{1}}$. Thus, it is enough to show that 
$E((\sigma_{4n_{1}})^p)<\infty$. Next,
\beqn\label{666}
E((\sigma_{4n_{1}})^p) & = & \sum_{n\geq0}  \ E[( \sum_{i=1}^{4n} \sigma_i)^p]  \  P(  n_{1} =   n ),
\eeqn
where we used the independence between the $n_k$'s and the $\sigma_i$'s. It is straightforward to check that $\{n_k-n_{k-1}\}_{k\geq 1}$ is a sequence of independent 
geometric random variables with parameter $(1/2)^4$. It then follows that the tail of $n_{1}$
decreases exponentially fast.
Since $\sigma_1$ has all finite moments (by Lemma \ref{moments}), the latter identity implies that 
$\bt_2-\bt_1$ also has all finite moments.

\bigskip

Let us now proceed with the second term.  We will show that $\int_{\bt_1}^{\bt_2} c(\psi_1'(u)) du$
has all finite moments. The moments of  $\int_{\bt_2}^{\bt_3} c(\psi_2'(u)) du$
can be controlled along the same lines.
Recall that we made the assumption that $c$ is locally bounded and 
that there exists $\alpha>0$ such that 
$$
 \lim_{|x|\rightarrow \infty} c(x)/|x|^\alpha = 0.
$$ 
We will assume without loss of generality that $\alpha>1$. 
Let $m,M\geq0$ be such that $|c(x)|\leq m+ M |x|^\alpha$ for every $x\in\R$, so that
$$
|\int_{\bt_1}^{\bt_2} c(\psi_1'(u)) du| \ \leq \ m(\bt_2-\bt_1) +   M \  \int_{\bt_1}^{\bt_2} |\psi_1'(u)|^\alpha du.
$$
Since $\bt_2-\bt_1$ has all finite moments, it remains to control the moments of $\int_{\bt_1}^{\bt_2} |\psi_1'(u)|^\alpha du$.
In order to deal with this term, we will use the so-called free-knot approximation
introduced in \cite{LS15}: Let us consider the function $\phi $ obtained by linear interpolation of the points $(\sigma_n,w(\sigma_n))_{n\geq0}$.
In particuar, 
this function is constructed in such a way that $|| \phi - w ||_{\infty,[0,\infty)}\leq h/2$.
From Corollary \ref{lem:dec}, $\psi_1$ is the unique minimizer of
$$
\Ar\{ \int_{\bt_1}^{\bt_{2}} \  |\vp'(u)|^\alpha \ du \ : \ \vp\in \ \mbox{AC}([\bt_1, \bt_2]),  \ || w-  \vp ||_{\infty, I_1} \leq \frac{h}{2} \}, 
$$
and thus

\beqnn
\int_{\bt_1}^{\bt_2} |\psi_1'(u))|^\alpha du  & \leq & \int_{\bt_1}^{\bt_2} |\phi'(u)|^\alpha du  \\
						      & \leq & \int_{0}^{\sigma_{4n_{2}}} |\phi'(u)|^\alpha du,
\eeqnn
where the last inequality is a direct consequence of Lemma \ref{deltat}.
Further, 
\beqnn
 \int_{0}^{\sigma_{4n_{2}}} |\phi'(u)|^\alpha du  & = &  (\frac{h}{4})^{\alpha} \sum_{i=1}^{4n_{2}}   \frac{1}{ (\sigma_i -\sigma_{i-1})^{\alpha-1}}.
\eeqnn
This yields for any $p\in\mathbb{N}$
\beqnn
E\left( ( \int_{\bt_1}^{\bt_2} |\psi_1'(u)|^\alpha du )^p\right)  & \leq & 
(\frac{h}{4})^{\alpha p} \ E\left(  \left( \sum_{i=1}^{4n_{2}}  \ \frac{1}{ (\sigma_i -\sigma_{i-1})^{\alpha-1}}  \right)^p \right). 
\eeqnn
Using Lemma \ref{moments} and again the fact that $n_{1},n_{2}-n_{1}$ are independent geometric random variables
independent of the $\sigma_i$'s (as in (\ref{666})), 
we get that $\int_{\bt_1}^{\bt_2} \ |\psi_1'(u)|^\alpha du$ has all finite moments. This ends the proof of 
the lemma.

\end{proof}

We now turn to the tightness of $\{R(T)\}_{T\geq0}$ as defined in (\ref{R(T)}).
\subsection{Tightness of $R(T)$}

\begin{lemma}\label{njk}
The sequences of random variables 
$$\left\{\int_{\bt_{N(T)-1}}^T c(\eta_T'(u)) du\right\}_{T\geq0} \ \mbox{and}  \  \ \left\{ \int_{\bt_{N(T)-1}}^{\bt_{N(T)}}  c(\psi_{N(T)-1}'(u)) du \right\}_{T\geq0}$$ 
are tight.
\end{lemma}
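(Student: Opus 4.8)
\emph{Reduction.} Since $c$ is locally bounded and $c(x)/x^{\alpha}\to0$, I may fix $\alpha>1$ and constants $m,M$ with $|c(x)|\le m+M|x|^{\alpha}$, so that $\int_{J}|c(\vp'(u))|\,du\le m|J|+M\int_{J}|\vp'(u)|^{\alpha}\,du$ for any interval $J$ and admissible $\vp$. Hence it suffices to prove tightness of the three families $\{T-\bt_{N(T)-1}\}_{T}$, $\{\int_{\bt_{N(T)-1}}^{T}|\eta_{T}'|^{\alpha}\}_{T}$ and $\{\int_{\bt_{N(T)-1}}^{\bt_{N(T)}}|\psi_{N(T)-1}'|^{\alpha}\}_{T}$; applying the same inequality once more then gives both statements of the lemma. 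Throughout, $\lambda_{n}:=\bt_{n}-\bt_{n-1}$ ($n\ge2$) is an i.i.d.\ sequence with all finite moments (Corollary \ref{cor:1}), $\bt_{n}=\bt_{1}+\sum_{2\le k\le n}\lambda_{k}$, and $N(T)=\sup\{n:\bt_{n}\le T\}$.

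\emph{A renewal--reward tightness lemma.} The main tool I would isolate first is: if $\{B_{n}\}_{n\ge2}$ are random variables such that $B_{n}$ is an a.s.\ finite functional of the $n$-th Neveu--Pitman block $(w(\bt_{n-1}+\,\cdot\,)-w(\bt_{n-1}))|_{[0,\lambda_{n}]}$ together with its sign $(-1)^{n}$ --- hence i.i.d.\ up to the two colours --- then $\{B_{N(T)}\}_{T}$ is tight; moreover, allowing an extra ``position'' argument $B_{n}=g_{n}(\mathrm{block}_{n},v)$, $v\in[0,\lambda_{n}]$, the family $\{g_{N(T)+1}(\mathrm{block}_{N(T)+1},\,T-\bt_{N(T)})\}_{T}$ (a functional of the straddling block and the residual time) is tight. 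The proof conditions on the $\sigma$-field $\mathcal G_{n-1}$ of the first $n-1$ blocks, writes $P(N(T)=n,|B_{N(T)}|>K)=E[\mathbf 1_{\{\bt_{n-1}\le T\}}\,\Psi_{K}(T-\bt_{n-1})]$ with $\Psi_{K}(v)=E[\mathbf 1_{\{\lambda\le v\}}\mathbf 1_{\{|B|>K\}}\bar G(v-\lambda)]$, $(\lambda,B)$ a fresh copy of $(\lambda_{n},B_{n})$ and $\bar G(s)=P(\lambda_{n}>s)$, sums over $n$, and uses $\sup_{t}U([t,t+1])<\infty$ for the (delayed) renewal measure $U=\sum_{n}P(\bt_{n}\in\cdot)$, which holds since $E\lambda_{n}<\infty$; one is left with $\sum_{k\ge0}\sup_{[k,k+1]}\Psi_{K}\le E[\lambda\,\mathbf 1_{\{|B|>K\}}]+(E\lambda+2)P(|B|>K)\to0$ as $K\to\infty$, uniformly in $T$ --- only $E\lambda<\infty$ and $|B|<\infty$ a.s.\ enter. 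The straddling/position-dependent variant is the same with $\sum_{k}\sup$ replaced by $\int_{0}^{\infty}P(\lambda>v,|g(\mathrm{block},v)|>K)\,dv$; this uses the uniform bound $\sup_{t}U([t,t+\epsilon])\le C\epsilon$, valid because $\bt_{2}-\bt_{1}$ is spread out (it has a density), so $U$ has a bounded density by standard renewal theory.

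\emph{The two families.} For the second family, $\int_{\bt_{N(T)-1}}^{\bt_{N(T)}}|\psi_{N(T)-1}'|^{\alpha}\,du=B_{N(T)}$ with $B_{n}:=\int_{\bt_{n-1}}^{\bt_{n}}|\psi_{n-1}'|^{\alpha}\,du$, a block functional which is a.s.\ finite (in fact in every $L^{p}$, by the argument in Corollary \ref{cor:1}); the lemma applies at once. For the first family, $T-\bt_{N(T)-1}\le\lambda_{N(T)}+\lambda_{N(T)+1}$ is a sum of a last--completed and a straddling block length, both tight by the lemma. It remains to dominate $\int_{\bt_{N(T)-1}}^{T}|\eta_{T}'|^{\alpha}$. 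By Theorem \ref{main} and the remark after it, for $N(T)\ge4$ the point $\bt_{N(T)-1}$ is a knot point, so $\eta_{T}(\bt_{N(T)-1})=w(\bt_{N(T)-1})+(-1)^{N(T)}\tfrac h2=\psi_{N(T)-1}(\bt_{N(T)-1})$, and by Propositions \ref{prop-der}--\ref{ghk} the restriction $\eta_{T}|_{[\bt_{N(T)-1},T]}$ is the unique minimizer of $\int|\vp'|^{\alpha}$ among AC functions in the tube with endpoint values $\eta_{T}(\bt_{N(T)-1})$ and $w(T)$. I would bound its $\alpha$-energy by that of the competitor $\zeta$: on $[\bt_{N(T)-1},\bt_{N(T)}]$ let $\zeta=\psi_{N(T)-1}$ (its left value matches $\eta_{T}$, its right value is the opposite tube boundary at the extremum $\bt_{N(T)}$); on $[\bt_{N(T)},s^{\star}]$, with $s^{\star}$ the first time $\ge\bt_{N(T)}$ at which $w$ regains the level $\psi_{N(T)-1}(\bt_{N(T)})$, let $\zeta$ be constant at that level (this stays in the tube because right after the Neveu--Pitman extremum $\bt_{N(T)}$ the function $w$ lies on one side of $w(\bt_{N(T)})$, and it costs no energy, and it forces $\zeta(s^{\star})=w(s^{\star})$); and on what remains of $[\bt_{N(T)},T]$ let $\zeta$ follow the free--knot interpolation of the $(h/4)$-oscillation points of $w$ restarted at $s^{\star}$, corrected on one short final sub-interval to reach $w(T)$. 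One checks $\zeta$ is admissible and $\int_{\bt_{N(T)-1}}^{T}|\zeta'|^{\alpha}=B_{N(T)}+0+g_{N(T)+1}(\mathrm{block}_{N(T)+1},T-\bt_{N(T)})$, the last term an a.s.\ finite functional of the straddling block and the residual --- made up of a sum of reciprocal $(h/4)$-oscillation spacings (finite a.s.\ as in Corollary \ref{cor:1}, via Lemmas \ref{moments} and \ref{deltat}) and one correction term of the form $|\mathrm{displacement}|^{\alpha}/(\mathrm{short\ time})^{\alpha-1}\le(h/2)^{\alpha}/(\mathrm{short\ time})^{\alpha-1}$. The renewal--reward lemma then makes this term tight, which finishes the proof; the cases $N(T)<4$ and the degenerate sub-cases of the construction ($s^{\star}>T$, no oscillation point in $[\bt_{N(T)},T]$, and the like) are dealt with separately and trivially.

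\emph{Main obstacle.} The renewal--reward lemma is routine (it needs only finite \emph{first} moment of the block lengths plus, in the position-dependent case, the standard uniform bound on the renewal function). The genuine work is the competitor on $[\bt_{N(T)},T]$: because the tube has no slack one cannot join a tube--boundary value to $w(T)$ by cutting corners, and the fix rests on two facts --- that $\bt_{N(T)}$ is a Neveu--Pitman extremum, so $w$ is one-sided immediately afterwards (this is exactly what keeps the zero--energy ``wait at the boundary'' segment inside the tube until $\zeta$ meets $w$), and that $w$ moves by less than $h/4$ over one $(h/4)$-oscillation interval (so the free--knot segment and its final correction stay inside). The one non-elementary probabilistic input, needed only for tightness of the final correction term over $T$, is the spread-out-ness of $\bt_{2}-\bt_{1}$, i.e.\ a uniform bound on the renewal function of the Neveu--Pitman point process.
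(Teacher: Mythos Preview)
Your argument is correct and reaches the conclusion, but it diverges from the paper's route at two points, and your path is noticeably heavier.

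First, you treat the two families separately, handling $\int_{\bt_{N(T)-1}}^{\bt_{N(T)}}|\psi'_{N(T)-1}|^{\alpha}$ as a block functional $B_{N(T)}$ via your abstract renewal--reward lemma. The paper instead observes that, by Corollary~\ref{lem:dec} with $\bar c(x)=|x|^{\alpha}$, the function $\psi_{N(T)-1}$ is the \emph{free-boundary} $\alpha$-minimizer on $[\bt_{N(T)-1},\bt_{N(T)}]$; hence $\int|\psi'_{N(T)-1}|^{\alpha}\le\int|\eta_T'|^{\alpha}$ on that interval, and therefore $\le\int_{\bt_{N(T)-1}}^{T}|\eta_T'|^{\alpha}$. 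This collapses both families to a single one in one line, so only the $\eta_T$-integral has to be controlled.

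Second, and more substantially, your competitor $\zeta$ on $[\bt_{N(T)-1},T]$ is assembled in three phases ($\psi_{N(T)-1}$, then a zero-energy ``wait'' at the boundary until $w$ meets it, then a \emph{restarted} $(h/4)$-skeleton plus a final slope correction), and the degenerate sub-cases each need their own correction whose tightness ultimately rests on a Stone-type bounded-density bound for the $\bt$-renewal. The paper sidesteps the left-endpoint matching altogether by backing up one $\bt$-interval: since $\eta_T$ sits on opposite tube boundaries at $\bt_{N(T)-2}$ and $\bt_{N(T)-1}$ while the \emph{global} $(h/4)$-skeleton interpolant $f_T$ (with the extra node $(T,w(T))$) lies in the tube, the intermediate value theorem yields $u_{N(T)-2}\in[\bt_{N(T)-2},\bt_{N(T)-1}]$ with $\eta_T(u_{N(T)-2})=f_T(u_{N(T)-2})$. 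Comparing on $[u_{N(T)-2},T]$ gives $\int_{\bt_{N(T)-1}}^{T}|\eta_T'|^{\alpha}\le\int_{\bt_{N(T)-2}}^{T}|f_T'|^{\alpha}$, a single global-skeleton sum with exactly one correction piece $(h/4)^{\alpha}/(T-\sigma_{\theta(T)})^{\alpha-1}$. Tightness follows from Miller's convergence theorem for the $\sigma$-renewal, together with an elementary coin-flip argument (as in Lemma~\ref{deltat}) showing that $\sigma_{\theta(T)-K}\le\bt_{N(T)-2}$ with probability $\to1$ as $K\to\infty$ uniformly in $T$. No bounded renewal density is invoked, no phases, no case split.

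Both approaches work; the paper's is shorter because Corollary~\ref{lem:dec} absorbs the $\psi$-family at once and the intermediate-value trick on the \emph{previous} $\bt$-interval eliminates the endpoint-matching gymnastics that make up most of your construction.
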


\begin{proof}

{\bf Step 0.} We start by recalling a standard result from renewal theory.
Define $\Delta \bt_{i}=\bt_{i+1}-\bt_i$. 
$\Delta \bt_i$ is obviously non-lattice and from the previous subsection, it has a finite first finite moment. Since the $\Delta \bt_i$'s form a 
sequence of i.i.d. random variables, from \cite{M74}, the random sequence
$$
(T-\bt_{N(T)}, \Delta \bt_{N(T)}, \cdots, \Delta \bt_{(N(T)-k)^+},\cdots )
$$
converges (in the sense of finite dimensional distributions) to the sequence
$$
(u, \delta \bt_0, \cdots, \delta \bt_k, \cdots)
$$
where the $\delta \bt_i$'s are independent; for $i\geq 1$ the r.v. $\delta \bt_i$
is distributed as $\Delta \bt_2$; $\delta \bt_0$ has the size biased distribution:
$$
E(f(\delta \bt_0)) \ = \ \frac{1}{E(\Delta \bt_2)}\int_{\R^+} f(u) P(\Delta \bt_2>u) du
$$
for every test function $f$ (i.e., infinitely differentiable with compact support). Finally, $u$
is independent of the $\delta \bt_i$'s with $i\geq1$, but conditioned on $\delta \bt_0$,
$u$
is a uniform random variable on $[0,\delta \bt_0]$ (in the renewal terminology, $u$ has the backward recurrence 
time distribution).

\bigskip

{\bf Step 1.} There exists $m,M\geq0$ and $\alpha>1$ such that 
$
c(u) \leq m + M |u|^\alpha.
$
Thus
$$
|\int_{\bt_{N(T)-1}}^T c(\eta_T'(u)) du| \ \leq  \ m(T-\bt_{N(T)-1}) + M \int_{\bt_{N(T)-1}}^T |\eta_T'(u)|^\alpha du.
$$
Further,
\begin{eqnarray*}
|\int_{\bt_{N(T)-1}}^{N(T)} c(\psi_{N(T)-1}'(u)) du 
& \leq & m(\bt_{N(T)} - \bt_{N(T)-1})  \ + \  M \int_{\bt_{N(T)-1}}^{\bt_{N(T)}}  |\psi_{N(T)-1}'(u)|^\alpha du  \\
& \leq & m(\bt_{N(T)} - \bt_{N(T)-1}) \ + \ M  \int_{\bt_{N(T)-1}}^{\bt_{N(T)}}  |\eta_{T}'(u)|^\alpha du  \\
& \leq & m(T - \bt_{N(T)-1})  \ + \  M \int_{\bt_{N(T)-1}}^T  |\eta_{T}'(u)|^\alpha du.
\end{eqnarray*}
where the second inequality is a direct consequence of Corollary \ref{lem:dec}. Thus, the two sequences of interest
are bounded from above by the RHS of the latter inequality. Finally,
since Step 0 above implies that 
the first term converges in distribution to $m (u+\delta t_1)$, it remains to show the tightness of 
 $\{\int_{\bt_{N(T)-1}}^T  |\eta_{T}'(u)|^\alpha du\}_{T\geq0}$.

\bigskip

{\bf Step 2.}
Define $\theta(T) = \sup\{n \ : \ \sigma_n \leq T\}$ 
and consider the function $f_T$ obtained by linear interpolation of the points
$$
\{(\sigma_k , w(\sigma_k))\}_{k\leq \theta(T)}  \ \mbox{and} \ (T,w(T)),
$$
in such a way that $f_T$ is an admissible function 
for the minimization problem (\ref{Min-problem}). 
From Theorem \ref{main}, for $N(T)\geq 4$, we have
\beqnn
\eta_T(\bt_{N(T)-2}) - f_T(\bt_{N(T)-2}) \ = \ w(\bt_{N(T)-2}) - f_T(\bt_{N(T)-2}) - (-1)^{N(T)-2} \frac{h}{2} \\
\eta_T(\bt_{N(T)-1}) - f_T(\bt_{N(T)-1}) \ = \ w(\bt_{N(T)-1}) - f_T(\bt_{N(T)-1}) - (-1)^{N(T)-1} \frac{h}{2} 
\eeqnn
Since $f_T$ and $w$ must stay with a distance $h/2$ from one another, the intermediate value theorem
implies that  
there exists $u_{N(T)-2}\in[\bt_{N(T)-2}, \bt_{N(T)-1}]$
such that $f_T(u_{N(T)-2})=\eta_T(u_{N(T)-2})$. Thus,
\beqnn
\int_{\bt_{N(T)-1}}^T  |\eta_T'(u)|^\alpha \ du  & \leq &  \int_{u_{N(T)-2}}^T |\eta_{T}'(u)|^\alpha du \\
								    & \leq & \int_{u_{N(T)-2}}^T |f_T'(u)|^\alpha du 
								     \leq  \int_{\bt_{N(T)-2}}^T |f_T'(u)|^\alpha du 
\eeqnn
where we also used the fact that $\eta_T$ minimizes the energy on $[0,T]$,
and thus that $\eta_T$ is also the only minimizer of
\beqnn
\Min\{\int_{u_{N(T)-2}}^T |\vp'(u)|^\alpha du \ : \ \vp\in \mbox{AC}([u_{N(T)-2}, T]), \\
\vp(u_{N(T)-2}) = f_T(u_{N(T)-2}), \ \vp(T)=f_T(T) ,||\vp-w||_{\infty,[u_{N(T)-2}, T]}\leq \frac{h}{2} \}.
\eeqnn
Let us now assume that there exists $K$ large enough such that 
$0\leq \sigma_{\theta(T)-K} \leq\bt_{N(T)-2}$. The previous inequality implies 
that
\beqnn
\int_{\bt_{N(T)-1}}^T  \ |\eta_T'(u)|^\alpha \ du
 &  \leq  & 
 \int_{\sigma_{\theta(T)-K}}^T |f_T'(u)|^\alpha du  	 \\
  &  =    &
\sum_{k=0}^{K-1} \int_{\sigma_{\theta(T)-(k+1)}}^{\sigma_{\theta(T)-k}} |f_T'(u)|^\alpha du  \ + \ \int_{\sigma_{\theta(T)}}^T |f_T'(u)|^\alpha du  \\
								     & \leq & 
								     \frac{h^\alpha}{4^\alpha} 
								     \left( 
								     \sum_{k=0}^{K-1} \frac{1}{(\Delta \sigma_{\theta(T)-(k+1)})^{\alpha-1}} \ + \ 
								      \frac{1}{(T-\sigma_{\theta(T)})^{\alpha-1}} \right)		
\eeqnn				
where $\Delta \sigma_i = \sigma_{i+1}-\sigma_i$.

\bigskip

{\bf Step 3.} By applying the same renewal theorem used in Step 0 (applied now to the sequence $\{\sigma_n\}_{n\geq0}$), the RHS of the latter inequality is tight. Thus, we need to show that 
$$
\lim_{K\uparrow\infty} \ \lim_{T\uparrow\infty} P\left(  0 \leq  \sigma_{\theta(T)-K}\leq \bt_{N(T)-2}   \right) \ = \ 1.
$$
Recall that $\Delta_{n} \ = \ \frac{4}{h}(w(\sigma_{n+1})-w(\sigma_n))$.
Let us assume that we can find $1<i_1<i_2<i_3<i_4<K-3$ such that
$$
\Delta_{\theta(T)- i_k}=\cdots=\Delta_{\theta(T)-(i_k-3)} = (-1)^k
$$
so that $w$ experiences a downcrossing (resp., upcrossing) of size $h$ for $k$ odd (resp. even)
on the interval $[\theta(T)-i_k, \theta(T)-(i_k-4)]$. 
By reasonning as in Lemma \ref{deltat}, the times $\sigma_{\theta(T)-i_k}$'s
 must all belong to distinct intervals $[\bt_{i}, \bt_{i+1}]$. Since $\theta(T) \leq T$, this yields
$$
\sigma_{\theta(T)-i _4} \leq \bt_{N(T)-2}.
$$
On the other hand, by independence of the $\sigma_k$'s and the $\Delta_k$'s, the sequence
$$
\left( \ \Delta_{(\theta(T)-1)^+},\cdots,\Delta_{(\theta(T)-K)^+}, \cdots \ \right)
$$
converges in distribution to the first $K$ coordinates of an infinite sequence of i.i.d. random variables 
$
(\delta_{1},\cdots, \delta_{K},\cdots)
$ 
with $P(\delta_i=\pm 1)=1/2$. Since the probability to find $4$ indices $1<i_1<\cdots<i_4$
such that 
$$
\delta_{i_k}=\cdots=\delta_{i_k-3} = (-1)^k
$$
in the infinite sequence
$
\{\delta_{k}\}
$ 
is equal to $1$, it follows that 
$$
\lim_{K\rightarrow\infty}\lim_{T\rightarrow\infty} \ P\left(  \sigma_{\theta(T)-K}\leq \bt_{N(T)-2}   \right) \ = \ 1.
$$

\end{proof}

\begin{corollary}\label{cor:tightness}
The sequence $\{R(T)\}_{T\geq0}$ is tight.
\end{corollary}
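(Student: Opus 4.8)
The plan is to show that each of the three terms in the definition (\ref{R(T)}) of $R(T)$ forms a tight family, and then invoke the fact that a finite sum of tight families is tight. The second term, $\int_{\bt_{N(T)-1}}^{T} c(\eta_T'(u))\, du$, is precisely the first family treated in Lemma \ref{njk}, and the third term equals $\pm 1_{\{N(T)\mbox{ even}\}}$ times the second family of Lemma \ref{njk}; since multiplying a tight family by $\{0,1\}$-valued random variables keeps it tight, both of these are already under control. The corollary therefore reduces to the tightness of $\{\int_0^{\bt_2} c(\eta_T'(u))\, du\}_{T\geq 0}$.

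For this remaining term I would first restrict to $T$ with $N(T)\geq 4$. By Theorem \ref{main}, $\eta_T$ coincides with $\psi_2$ on $[\bt_2,\bt_3]$, so, using identity (\ref{esd}) with $i=2$, $\eta_T(\bt_2)=\psi_2(\bt_2)=w(\bt_2)-h/2$ and $\eta_T(\bt_3)=\psi_2(\bt_3)=w(\bt_3)+h/2$. Since $\eta_T(0)=w(0)$, the restriction $\eta_T|_{[0,\bt_2]}$ is the unique taut string on the \emph{fixed} interval $[0,\bt_2]$ with the \emph{fixed} boundary values $w(0)$ and $w(\bt_2)-h/2$ (restrictions of minimizers are minimizers, exactly as in the proof of Theorem \ref{main}); in particular it does not depend on $T$, and hence $\int_0^{\bt_2} c(\eta_T'(u))\, du$ is one and the same random variable $Z$ for all such $T$. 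It then suffices to check that $Z$ is a.s.\ finite. Writing $|c(x)|\leq m+M|x|^\alpha$ as in Step 1 of the proof of Lemma \ref{njk}, one has $|Z|\leq m\,\bt_2 + M\int_0^{\bt_2}|\eta_T'(u)|^\alpha\, du$, and $\bt_2$ has all moments by Corollary \ref{cor:1}, so it remains to bound $\int_0^{\bt_2}|\eta_T'(u)|^\alpha\, du$. Here I would repeat the device of Step 2 of Lemma \ref{njk}: let $\phi$ be the free-knot interpolant through the points $(\sigma_n,w(\sigma_n))$, which is admissible and satisfies $||\phi-w||_{\infty}\leq h/2$; since $\phi(\bt_2)\geq w(\bt_2)-h/2=\eta_T(\bt_2)$ and $\phi(\bt_3)\leq w(\bt_3)+h/2=\eta_T(\bt_3)$, the intermediate value theorem yields $u_2\in[\bt_2,\bt_3]$ with $\phi(u_2)=\eta_T(u_2)$, and, since $\eta_T$ minimizes the $L^\alpha$-energy on $[0,u_2]$ against $\phi$ (same endpoints, $\phi$ admissible),
$$
\int_0^{\bt_2}|\eta_T'(u)|^\alpha\,du \ \leq \ \int_0^{u_2}|\eta_T'(u)|^\alpha\,du \ \leq \ \int_0^{u_2}|\phi'(u)|^\alpha\,du \ \leq \ (\tfrac{h}{4})^\alpha \sum_{i=1}^{\theta(\bt_3)+1}\frac{1}{(\sigma_i-\sigma_{i-1})^{\alpha-1}}.
$$
The right-hand side is a.s.\ finite — indeed has all moments — by Lemma \ref{moments}, the exponential tail of the number of $\sigma$-points before $\bt_3$, and the finiteness of all moments of $\bt_3$, by exactly the computation carried out in Corollary \ref{cor:1} (with a bound of the form $\bt_3\leq\sigma_{4m}$, $m$ a sum of geometric random variables, as in Lemma \ref{deltat}). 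Hence $Z$ is a.s.\ finite and this family is tight.

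It remains to dispose of the event $\{N(T)<4\}=\{\bt_4>T\}$, whose probability tends to $0$ as $T\to\infty$ since $\bt_4<\infty$ a.s.. Combined with the tightness of the three families above, this already gives $R(T)/\sqrt{T}\to 0$ and $R(T)/T\to 0$ in probability as $T\to\infty$ (the only consequence of the corollary that is used downstream); if one wants literal tightness of $\{R(T)\}_{T\geq 0}$, the contribution of $\{N(T)<4\}$ over a bounded range of $T$ is handled by dominating $\int_0^T c(\eta_T'(u))\,du$ by $m\bt_4+M\int_0^T|f_T'(u)|^\alpha\,du$ for an admissible interpolant $f_T$ on $[0,T]$ and noting that only finitely many $Y_i$ are involved. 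Assembling these pieces gives the claim. I expect the main obstacle to be the bound on $\int_0^{\bt_2}|\eta_T'|^\alpha$ in the second paragraph: one cannot compare $\eta_T|_{[0,\bt_2]}$ directly with $\phi$ on $[0,\bt_2]$, because the left endpoint value $w(\bt_2)-h/2$ lies on the \emph{boundary} of the tube rather than at its centre (where $\phi$ ends); the intermediate-point device of Lemma \ref{njk} circumvents this, as would a direct construction of a tube-respecting ramp near $\bt_2$, which is possible a.s.\ because $\bt_2$ is a.s.\ a strict local maximum of $w$.
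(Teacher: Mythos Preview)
Your proposal is correct and follows the same route as the paper: invoke Lemma \ref{njk} for the last two terms, then observe that once $N(T)\geq 4$ the restriction $\eta_T|_{[0,\bt_2]}$ solves a $T$-independent variational problem (by Theorem \ref{main} and Proposition \ref{ghk}), so the first term is eventually a fixed random variable. The paper's proof stops right there; your further work bounding this random variable via the free-knot interpolant and handling $\{N(T)<4\}$ explicitly is correct but unnecessary, since a single real-valued random variable is automatically tight (and a.s.\ finiteness is immediate from $\eta_T$ also being the $L^\alpha$-minimizer by Proposition \ref{prop-der}).
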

\begin{proof}
From the devious result,  it remains to control the first term of $R(T)$, i.e., $\int_{0}^{\bt_2} c(\eta_T'(u)) du$. When $N(T)\geq4$, 
Theorem \ref{main} implies that $\eta_T(\bt_2)=w(\bt_2)-h/2$ and thus 
the taut string $\eta_T$
restricted on the interval $[0,\bt_2]$ must be the unique solution of 
\begin{eqnarray*}
\Min\{ \int_{0}^{\bt_2} \ |\vp'(u)|^2 \ du \ : \ \vp\in \ \mbox{AC}([0, \bt_2]),\  
\vp(0)=0 \ ,  \ \  \vp(\bt_2) = w(\bt_2) - \frac{h}{2}, \    
|| w-  \vp ||_{\infty,[0,\bt_2]} \leq \frac{h}{2}  \}.
\end{eqnarray*}
This implies that  $\int_{0}^{\bt_2} c(\eta_T'(u)) du$  is constant, if $T$ is large enough so that  $N(T)\geq4$. This obviously entails  tightness of the first term.
This ends the proof of our corollary.
\end{proof}

\bigskip

\section{Proof of Proposition \ref{CTLA}}
\label{proof:CLTA}

Let $[x]$ denote the integer part of $x$. Write
\begin{eqnarray*}
\tS_{\tilde{\mathcal N}(t)-1} \ - \  t \frac{\bX}{\bT} \ = \ (\tS_{\tilde{\mathcal N}(t)-1} - \tS_{[\frac{t}{\bT}]}) \ + \ (\tS_{[\frac{t}{\bT}]} - t\frac{\bX}{\bT}).
\end{eqnarray*}
Our proposition is a direct consequence of the two following lemmas.

\begin{lemma}\label{kla}
$$
\lim_{t\rightarrow\infty} \ \frac{1}{\sqrt{t}} \left( \ (\tS_{\tilde{\mathcal N}(t)-1} \ - \ \tS_{[\frac{t}{\bT}]}) \ - \  \bX(\tilde{\mathcal N}(t)-[\frac{t}{\bT}]) \ \right)\ = \  0 \ \mbox{in probability,}
$$
\end{lemma}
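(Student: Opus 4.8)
The quantity $\tS_{\tilde{\mathcal N}(t)-1} - \tS_{[t/\bT]} - \bX\bigl(\tilde{\mathcal N}(t)-[t/\bT]\bigr)$ is a sum of $\tilde{\mathcal N}(t)-1-[t/\bT]$ centered i.i.d.\ terms $X_k - \bX$, where the number of summands is itself random. The plan is to show that this random index count is, with high probability, confined to a window of size $o(\sqrt t)$ around $[t/\bT]$, and then to invoke a maximal inequality for random walks to conclude that the corresponding partial sum is $o(\sqrt t)$ in probability. First I would record the elementary facts: by the strong law of large numbers for the renewal process built from the $\tau_k$'s, $\tilde{\mathcal N}(t)/t \to 1/\bT$ a.s., and moreover the central limit theorem for renewal processes gives $\bigl(\tilde{\mathcal N}(t) - t/\bT\bigr)/\sqrt t = O_P(1)$ (its limiting law is Gaussian with variance $\sigma_\tau^2/\bT^3$). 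Hence $\bigl|\tilde{\mathcal N}(t)-1-[t/\bT]\bigr| \le \delta\sqrt t$ with probability at least $1-\eps$ once $\delta$ is large and $t$ is large.

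Next I would fix $\eps>0$, choose such a $\delta$, and write $n_t=[t/\bT]$. On the event $A_{t,\delta}=\{|\tilde{\mathcal N}(t)-1-n_t|\le \delta\sqrt t\}$ we have
$$
\Bigl| \tS_{\tilde{\mathcal N}(t)-1} - \tS_{n_t} - \bX(\tilde{\mathcal N}(t)-1-n_t) \Bigr| \ \le \ \max_{0\le j \le \delta\sqrt t} \Bigl| \sum_{k=n_t+1}^{n_t+j} (X_k - \bX) \Bigr| \ + \ \max_{0\le j \le \delta\sqrt t}\Bigl| \sum_{k=n_t-j+1}^{n_t}(X_k-\bX)\Bigr|.
$$
(The stray term $\bX\cdot 1$ coming from the ``$-1$'' shift is a constant, hence trivially $o(\sqrt t)$, and can be absorbed.) By stationarity of the increments, each of the two maxima on the right has the same law as $\max_{0\le j\le \delta\sqrt t}|\sum_{k=1}^{j}(X_k-\bX)|$. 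Since $X_1$ has finite variance (indeed, in our application $X_1 = {\mathcal E}$ has all finite moments by Corollary \ref{cor:1}), Kolmogorov's maximal inequality gives
$$
P\Bigl( \max_{0\le j\le \delta\sqrt t} \Bigl|\sum_{k=1}^{j}(X_k-\bX)\Bigr| \ \ge \ \kappa\sqrt t \Bigr) \ \le \ \frac{\delta\sqrt t \,\sigma_X^2}{\kappa^2 t} \ = \ \frac{\delta\,\sigma_X^2}{\kappa^2\sqrt t} \ \longrightarrow \ 0
$$
as $t\to\infty$, for every fixed $\kappa>0$ and $\delta$. Combining this with $P(A_{t,\delta}^c)\le\eps$ for $t$ large, we get $\limsup_{t\to\infty} P\bigl(|\tS_{\tilde{\mathcal N}(t)-1}-\tS_{n_t}-\bX(\tilde{\mathcal N}(t)-n_t)|\ge \kappa\sqrt t\bigr)\le \eps$; since $\eps$ is arbitrary, this is the claimed convergence to $0$ in probability.

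The one point requiring a little care — and the main obstacle — is the interplay between the randomness of the stopping index $\tilde{\mathcal N}(t)$ and the partial sums of the $X_k$: one cannot directly apply a maximal inequality ``after'' conditioning on $\tilde{\mathcal N}(t)$ because $\tilde{\mathcal N}(t)$ depends on the whole $\tau$-sequence, which is correlated with the $X$-sequence. This is why the argument is organized as (i) trap the index in a deterministic window using only the renewal CLT for the $\tau$'s, on an event of probability $\ge 1-\eps$, and then (ii) bound the $X$-partial sum by a maximum over that \emph{deterministic} range, which is now a statement about an ordinary random walk and decouples from $\tilde{\mathcal N}(t)$. A minor technical check is that the renewal CLT stated in Step 0 of the proof of Lemma \ref{njk} (and the classical renewal CLT) indeed yields the $O_P(\sqrt t)$ control on $\tilde{\mathcal N}(t)-t/\bT$; this is standard given that $\tau_1$ has finite variance.
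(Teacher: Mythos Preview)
Your proof is correct and follows the same Anscombe-type strategy as the paper: confine $\tilde{\mathcal N}(t)$ to a deterministic window around $[t/\bT]$, then apply Kolmogorov's maximal inequality to the centered $X$-partial sums over that window. The only difference is the width of the window. You invoke the classical one-dimensional renewal CLT to take a window of size $\delta\sqrt t$, which makes the Kolmogorov bound $O(t^{-1/2})$ for each fixed $\delta$. The paper instead uses only the renewal law of large numbers and takes the wider window $[(1-\eps^3)t/\bT,\,(1+\eps^3)t/\bT]$; Kolmogorov's inequality then gives a bound of order $\sigma_X^2\eps/\bT$ (the cube in $\eps^3$ being chosen so that after dividing by $\eps^2 t$ one is left with a factor $\eps$), and one concludes by letting $\eps\downarrow 0$. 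The paper's version is marginally more self-contained in that it avoids appealing to any renewal CLT---a result whose two-dimensional extension is exactly what this section is establishing---but your use of the classical one-dimensional statement is not circular and is entirely legitimate.
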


\begin{lemma}\label{renew1}
The pair of random variables
$$
\left( \frac{1}{\sqrt{t}} \ (\tilde{\mathcal N}(t)-t/\bT) \cdot \frac{\bT^{3/2}}{\sigma_\tau } \ , \ (\tS_{[\frac{t}{\bT}]} - t \frac{\bX}{\bT}) \cdot \frac{\sqrt{\bT/t}}{\sigma_X} \right)
$$
converges in distribution to a two dimensional Gaussian random vector with mean $0$ and 
covariance matrix $\Sigma=\left( \begin{array}{cc} 1 & -\rho \\ -\rho & 1 \end{array} \right)$,
where $\rho=\sigma_{X,\tau}/\sigma_X\sigma_\tau$.
\end{lemma}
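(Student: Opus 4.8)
The plan is to deduce both coordinates of the limit from a single bivariate central limit theorem for the i.i.d.\ vectors $(\tau_i-\bT,\ X_i-\bX)$, and then to transfer the limit to the renewal count $\tilde{\mathcal N}(t)$ through the inversion identity $S_{\tilde{\mathcal N}(t)}\le t<S_{\tilde{\mathcal N}(t)+1}$.

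First I would record the bivariate CLT: since the $(\tau_i,X_i)$ are i.i.d.\ with finite second moments, $n^{-1/2}\bigl(S_n-n\bT,\ \tS_n-n\bX\bigr)$ converges in law to a centred Gaussian vector $(G_1,G_2)$ with $\mbox{Var}(G_1)=\sigma_\tau^2$, $\mbox{Var}(G_2)=\sigma_X^2$ and $\mbox{Cov}(G_1,G_2)=\sigma_{X,\tau}$. Evaluating along the deterministic indices $n(t):=[t/\bT]$, which satisfy $n(t)\to\infty$, $n(t)\bT=t+O(1)$ and $n(t)\bX=t\bX/\bT+O(1)$, and dividing by $\sqrt t$ so that the $O(1)$ corrections disappear, this yields
$$
\left(\ \frac{\sqrt{\bT/t}}{\sigma_\tau}\bigl(S_{n(t)}-t\bigr),\ \ \frac{\sqrt{\bT/t}}{\sigma_X}\bigl(\tS_{n(t)}-t\bX/\bT\bigr)\ \right)\ \Longrightarrow\ ({\mathcal W}_1,{\mathcal W}_2),
$$
a centred Gaussian pair with unit variances and correlation $\rho=\sigma_{X,\tau}/\sigma_X\sigma_\tau$; this already settles the second coordinate of the statement.

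Next I would handle the renewal count. From $S_{\tilde{\mathcal N}(t)}\le t<S_{\tilde{\mathcal N}(t)+1}$ one gets $0\le t-S_{\tilde{\mathcal N}(t)}<\tau_{\tilde{\mathcal N}(t)+1}$, and the straddling increment $\tau_{\tilde{\mathcal N}(t)+1}$ is $O_P(1)$ (its law converges to the proper size-biased distribution, exactly as in Step~0 of Lemma~\ref{njk}). Writing
$$
\bT\bigl(\tilde{\mathcal N}(t)-t/\bT\bigr)=-\bigl(S_{\tilde{\mathcal N}(t)}-\bT\,\tilde{\mathcal N}(t)\bigr)+\bigl(S_{\tilde{\mathcal N}(t)}-t\bigr),
$$
the last term is $O_P(1)$, and I would replace $S_{\tilde{\mathcal N}(t)}-\bT\,\tilde{\mathcal N}(t)$ by $S_{n(t)}-\bT\,n(t)$ at the cost of a sum $\sum_k(\tau_k-\bT)$ over the indices $k$ lying between $n(t)$ and $\tilde{\mathcal N}(t)$. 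Since $\tilde{\mathcal N}(t)-t/\bT=O_P(\sqrt t)$ — a crude consequence of the CLT for $S_n$ via $\{\tilde{\mathcal N}(t)\ge k\}=\{S_k\le t\}$ — this sum is $o_P(\sqrt t)$ by the Anscombe-type maximal inequality \cite{A52}, i.e.\ the same Kolmogorov/Ottaviani argument underlying Lemma~\ref{kla}, now applied to the increments $\tau_k-\bT$. Hence $\bT(\tilde{\mathcal N}(t)-t/\bT)=-(S_{n(t)}-t)+o_P(\sqrt t)$, so
$$
\frac{\bT^{3/2}}{\sqrt t\,\sigma_\tau}\bigl(\tilde{\mathcal N}(t)-t/\bT\bigr)=-\frac{\sqrt{\bT/t}}{\sigma_\tau}\bigl(S_{n(t)}-t\bigr)+o_P(1)\ \Longrightarrow\ -{\mathcal W}_1.
$$

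Finally, by the joint convergence displayed above together with Slutsky's lemma (the $o_P(1)$ term does not affect the limit), the pair in the statement converges in distribution to $(-{\mathcal W}_1,{\mathcal W}_2)$, which is centred Gaussian with covariance matrix $\Sigma$, as claimed. I expect the renewal step to be the only genuine obstacle: what is needed there is that the centred random walk $S_n$ does not move by more than $o_P(\sqrt t)$ over a window of width $O_P(\sqrt t)$ around the deterministic index $n(t)$ — precisely the uniform-continuity-in-probability (Anscombe) property of $S_n$ — and I would establish it exactly as in the proof of Lemma~\ref{kla}; everything else reduces to the classical multivariate CLT and the bookkeeping of $O(1)$ terms.
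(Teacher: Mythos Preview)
Your argument is correct: the bivariate CLT at the deterministic index $n(t)=[t/\bT]$, the identity $\bT(\tilde{\mathcal N}(t)-t/\bT)=-(S_{\tilde{\mathcal N}(t)}-\bT\tilde{\mathcal N}(t))+(S_{\tilde{\mathcal N}(t)}-t)$, the $O_P(1)$ bound on the overshoot, and the Anscombe/Kolmogorov transfer from $\tilde{\mathcal N}(t)$ to $n(t)$ all combine via Slutsky exactly as you say. The paper takes a different route. Instead of replacing the random index by the deterministic one, it works directly with the distribution function: for real $y$ it introduces the deterministic index $n_t^y$ solving $t=x\bT+\sqrt{x}\,y\,\sigma_\tau$, uses the duality $\{\tilde{\mathcal N}(t)\in[n_t^b,n_t^a)\}=\{S_{n_t^b}\le t,\ S_{n_t^a}>t\}$, and then shows that $(S_{n_t^y}-n_t^y\bT)/(\sigma_\tau\sqrt{n_t^y})$ differs from $(S_{[t/\bT]}-t)/(\sigma_\tau\sqrt{t/\bT})$ by a term going to $0$ in probability; the bivariate CLT for $(S_{[t/\bT]},U_{[t/\bT]})$ then gives the limit of $P(\tilde{\mathcal N}(t)\in[n_t^b,n_t^a),\ Y_t\in[c,d])$ for every rectangle. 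Your approach is the classical Anscombe transfer and has the virtue of recycling verbatim the maximal-inequality argument of Lemma~\ref{kla} (now applied to $\tau_k-\bT$), while the paper's inversion trick avoids that second appeal to Kolmogorov's inequality at the price of more bookkeeping on the indices $n_t^y$. Both ultimately rest on the same multivariate CLT and yield the sign flip responsible for the $-\rho$ off-diagonal entry.
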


\bigskip

\begin{proof}[Proof of Lemma \ref{kla}]
Define $Z_n = U_n - \bar X n$.
Let $\eps\in(0,1)$ and define $n_0(t) = [t/\bar \tau]$, $n_1(t) = [t/\bar \tau\cdot(1-\eps^3)]$, $n_2(t) = [t/\bar \tau\cdot(1+\eps^3)]$.
We aim at showing that
$\frac{1}{\sqrt{t}} \left( \ Z_{n_0(t)}  - Z_{\tilde{\mathcal N}(t)-1}\  \right)$ converges to $0$ in probability.
\begin{eqnarray*}
P\left( \frac{1}{\sqrt{t}} | Z_{n_0(t)}  - Z_{\tilde{\mathcal N}(t)-1}\ | >\eps  \right) 
& \leq & 
P\left( \frac{1}{\sqrt{t}} \ |Z_{n_0(t)}  - Z_{\tilde{\mathcal N}(t)-1}| \  >\eps, \ \tilde{\mathcal N}(t) \in[n_1(t),n_2(t)] \right) \  \\ 
&& + \ P\left(\tilde{\mathcal N}(t)\notin [n_1(t), n_2(t)] \right) 
\end{eqnarray*}
The second term on the RHS goes to $0$ as $n$ goes to $\infty$ by a standard renewal theorem. 
As for the first term, we use Kolmogorov inequality,
\begin{eqnarray*}
P\left( \frac{1}{\sqrt{t}} | \ Z_{n_0(t)}  - Z_{\tilde{\mathcal N}(t)-1}\  | \geq \eps, \ \tilde{\mathcal N}(t) \in[n_1(t),n_2(t)] \right) 
& \leq &
P\left(\sup_{u\in\{n_1(t)-1, \cdots, n_0(t)-1\}}  \frac{1}{\sqrt{t}} | \ Z_u  - Z_{n_0(t)}\  | \geq \eps \right) \\
&&+
P\left(\sup_{u\in\{n_0(t)-1,\cdots,n_2(t)-1\}}  \frac{1}{\sqrt{t}} | \ Z_u  - Z_{n_0(t)}\  | \geq \eps \right) \\
& \leq & \frac{1}{\eps^2 t} \ \left( E( Z_{n_0(t) - n_1(t)}^2) \ + \ E( Z_{n_2(t) - n_0(t)}^2)  \right)  \\
& \leq &  \frac{ 2 \sigma_X^2}{\eps^2 t} \frac{t}{\bar \tau} \eps^3 = 2\frac{\sigma_X^2}{\bar \tau} \eps. 
\end{eqnarray*}
Since the RHS goes to $0$ with $\eps$, this completes the proof of the lemma.
\end{proof}

\bigskip

\begin{proof}[Proof of Lemma \ref{renew1}]
To ease the notations, we write
$$
Y_t \ = \  (\tS_{[\frac{t}{\bT}]} - t \frac{\bX}{\bT}) \cdot \frac{\sqrt{\bT/t}}{\sigma_X}
$$
For any $y\in\R$ and $t\in\R^+$, define $n_t^y$ to be the integer part of the unique positive solution (in $x$)
of the equation
\begin{eqnarray}\label{def:t}
t &  =  & x \bar \tau + \sqrt{x} y \sigma_\tau . 
\end{eqnarray}
Let $a<b, c<d$ be four arbitrary real numbers.
First, for every $y$
\begin{eqnarray}
n_t^y & = & \frac{t}{\bT} - \sqrt{t} y \sigma_\tau / \bT^{3/2} + o(\sqrt{t}). \nonumber 
\end{eqnarray}
and thus there exists $\eps_t^a,\eps_t^b$ such that $\eps_t^a/\sqrt{t},\eps_t^b/\sqrt{t}\rightarrow\infty$ as $t\rightarrow0$ and 
\begin{eqnarray}
P\left(\tilde{\mathcal N}(t)\in[n_t^b,n_t^a), \ \  Y_t \in [c,d] \right) 
& = &
P\left( \tilde{\mathcal N}(t) \in [t/\bT - \frac{\sqrt{t} b \sigma_\tau}{\bT^{3/2}} + \eps_t^b, \  t/\bT - \frac{\sqrt{t} a \sigma_\tau}{\bT^{3/2}}+\eps^a_t], \ Y_t\in[c,d] \right) \nonumber \\ 
& = &
P\left( -\frac{1}{\sqrt{t}} \ (\tilde{\mathcal N}(t)-t/\bT) \cdot \frac{\bT^{3/2}}{\sigma_\tau } \in[a-\frac{\tilde \eps_t^a}{\sqrt{t}},b - \frac{\tilde \eps_t^b}{\sqrt{t}}], \ Y_t\in[c,d] \right) \label{erto}
\end{eqnarray}
where  $\tilde \eps_t^a/\sqrt{t},\tilde \eps_t^b/\sqrt{t}\rightarrow 0$ as $t\rightarrow\infty$.

\medskip

Secondly, let us now evaluate the law of the LHS of (\ref{erto}).
\begin{eqnarray*}
P\left(\tilde{\mathcal N}(t)\in[n_t^b,n_t^a), \ \  Y_t \in [c,d] \right) 
& = &
P\left( S_{n_t^b} \leq t , \ S_{n_t^a} > t , Y_t \in [c,d] \right) \nonumber  \\
& = &
P\left( \frac{S_{n_t^b} - n^b_t\bT}{\sigma_\tau \sqrt{n^b_t}} \leq \frac{t - n^b_t\bT}{\sigma_\tau \sqrt{n^b_t}}, \ \frac{S_{n_t^a} - n_t^a\bT}{\sigma_\tau \sqrt{n^a_t}} > \frac{t - n_t^a\bT}{\sigma_\tau \sqrt{n^a_t}} , Y_t \in [c,d] \right) \nonumber  \\
& = &
P\left( \frac{S_{n_t^b} - n^b_t\bT}{\sigma_\tau \sqrt{n^b_t}} \leq b, \ \frac{S_{n_t^a} - n_t^a\bT}{\sigma_\tau \sqrt{n^a_t}} > a, Y_t \in [c,d] \right) \nonumber 
\end{eqnarray*}
where the third inequality follows directly from the definition of $n_t^y$.
Since $n_t^a, n_t^b \approx t/\bT$, it is tempting to use this approximation in the latter equality
and write 
\begin{eqnarray*}
P\left(\tilde{\mathcal N}(t)\in[n_t^b,n_t^a), \ Y_t \in [c,d]  \right) 
& \approx &
P\left( \frac{S_{[t/\bT]} - t}{\sigma_\tau \sqrt{t/\bT}} \in (a,b], Y_t \in [c,d] \right), \nonumber
\end{eqnarray*}
More formally,
\begin{eqnarray*}
P\left(\tilde{\mathcal N}(t)\in[n_t^b,n_t^a), \ \ Y_t \in [c,d] \right) 
&  = &
P\left( \frac{S_{[t/\bT]} - t}{\sigma_\tau \sqrt{t/\bT}} \leq b - \bar \eps_t^ b, \ \frac{S_{[t/\bT]} - t}{\sigma_\tau \sqrt{t/\bT}} > a - \bar \eps_t^a, Y_t \in [c,d] \right) \nonumber \nonumber
\end{eqnarray*}
where 
$$
\forall y=a,b, \ \  \bar \eps_t^y \ = \ 
\frac{1}{\sqrt{t}}\left( S_{n_t^y} - S_{[t/\bT]} \ - \ \bT(n_t^y - t/\bT) \right) 
+  \ \frac{1}{\sqrt{t}}\left(  S_{n_t^y}  - \bT n_t^y \right) (\sqrt{\frac{t/\bT}{n^t_y}}-1). 
$$
We now claim that  $\bar \eps_t^a$ and $\bar \eps_t^b$ vanish as $t$ goes to $\infty$.
Indeed, applying Markov inequality twice yields that for every $\delta>0$
\begin{eqnarray*}
P( \bar \eps_t^y > \delta) & \leq &
\frac{1}{\delta^2 t } |n_t^y - [t/\bT]| E(\tau^2) 
+ \frac{n_t^y}{\delta^2 t}  (\sqrt{\frac{t/\bT}{n_t^y}}-1 )^2  E(\tau^2)  
\end{eqnarray*}
and thus $
\lim_{t\uparrow\infty}  \ \bar \eps_t^y   =   0
$
in probability. On the other hand, by
the multidimensional CLT,
$
\left( \frac{S_{[t/\bT]} - t}{\sigma_\tau \sqrt{t/\bT}}, Y_t \right)
$
converges in distribution to the two dimensional gaussian vector with mean $0$
and correlation matrix  $\left( \begin{array}{cc} 1 & \rho \\ \rho & 1 \end{array} \right)$. 
It then follows that for every $a,b,c,d$
$$
P\left(\tilde{\mathcal N}(t)\in[n_t^b,n_t^a), \ \   (\tS_{[\frac{t}{\bT}]} - t \frac{\bX}{\bT}) \cdot \frac{\sqrt{\bT/t}}{\sigma_X} \in [c,d] \right) \rightarrow \int_{[a,b]\times[c,d]} \exp\left( - \frac{1}{2} \ {^tx} \Sigma^{-1} x \right) \frac{1}{2 \pi \mbox{det}(\Sigma)^{1/2}} d\lambda(x)
$$
where $\lambda$ is the Lebesgue measure on $\R^2$. 
Combining this with (\ref{erto}) then yields our result.

\end{proof}

\section{Appendix}

\begin{prop}\label{ghk}
Let $0<a<b$ and $c,d\in\R$ such that $|c-w(a)|, |d-w(b)| \ \leq\  h/2$. If $C$ is a strictly convex function then both sets
\beqnn
\Ar\{ \int_a^b \ C(\vp'(t)) \ dt \ : \ \vp\in \ \mbox{AC}([a,b]), \  || w-  \vp ||_{\infty,[a,b]} \leq \frac{h}{2} \} \\
\Ar\{ \int_a^b \ C(\vp'(t)) \ dt \ : \ \vp\in \ \mbox{AC}([a,b]),  \ \vp(a) = c, \ \vp(b)=d, \  || w-  \vp ||_{\infty,[a,b]} \leq \frac{h}{2} \} \\
\eeqnn
have a unique element.
\end{prop}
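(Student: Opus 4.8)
The plan is to treat both minimization problems by the direct method of the calculus of variations: existence will come from weak lower semicontinuity of the energy together with a compactness argument, and uniqueness from the \emph{strict} convexity of $C$ combined with the \emph{convexity} of the two admissible sets. I will spell this out first for $C(x)=x^2$, where $W^{1,2}([a,b])$ is a convenient setting, and then indicate how the general case reduces to it. The preliminary point to settle is that the admissible sets are non-empty. For the free-endpoint problem, the piecewise-linear interpolant of $w$ at a fine enough partition of $[a,b]$ is admissible and Lipschitz, uniform continuity of $w$ keeping it inside the tube $\{|\vp-w|\le h/2\}$. For the fixed-endpoint problem one starts from the offset function $g(t)=w(t)+(1-s)(c-w(a))+s(d-w(b))$ with $s=(t-a)/(b-a)$, which has the prescribed endpoints and satisfies $\|g-w\|_{\infty,[a,b]}=\max(|c-w(a)|,|d-w(b)|)\le h/2$; when this bound is strict, a fine piecewise-linear approximation of $g$ works, while when an endpoint constraint is saturated, say $c=w(a)+h/2$, one first escapes that boundary on a short interval $[a,a+\delta]$ via $\vp(t)=c-\tilde D(t)$, where $\tilde D$ is an absolutely continuous nondecreasing function with $\tilde D(a)=0$ dominating $D(t):=\sup_{s\in[a,t]}(w(a)-w(s))$ (such a $\tilde D$ is built by interpolating linearly across the dyadic level sets of the continuous nondecreasing function $D$), and then continues as in the strict case.

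Now fix $C(x)=x^2$ and let $\vp_n$ be a minimizing sequence for either problem. The tube constraint bounds $\vp_n$ in $L^\infty$, and $\int_a^b(\vp_n')^2$ is bounded along the sequence, so $\{\vp_n\}$ is bounded in $W^{1,2}([a,b])$; extract a subsequence converging weakly in $W^{1,2}$ and, by the compact embedding $W^{1,2}([a,b])\hookrightarrow C([a,b])$, uniformly, to a limit $\vp$. Uniform convergence preserves the tube constraint and, in the second problem, the boundary values, so $\vp$ is admissible, and since $u\mapsto\int_a^b(u')^2$ is convex and strongly continuous it is weakly lower semicontinuous on $W^{1,2}$, whence $\vp$ attains the infimum. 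For uniqueness, observe that both admissible sets are convex, because the tube constraint and the affine boundary conditions survive convex combinations. If $\vp_1\ne\vp_2$ were two minimizers, then $\vp_1'\ne\vp_2'$ on a set of positive measure, so by the strict convexity of $C$ the midpoint $\tfrac12(\vp_1+\vp_2)$ --- again admissible --- would satisfy $\int_a^b C(\tfrac12(\vp_1'+\vp_2'))<\tfrac12\int_a^b C(\vp_1')+\tfrac12\int_a^b C(\vp_2')$, contradicting minimality.

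For a general strictly convex $C$ one reduces to a coercive integrand. Strict convexity makes $C'$ strictly increasing, so there is an affine $\ell(x)=\alpha x+\beta$ whose slope $\alpha$ lies strictly between the two asymptotic slopes of $C$; then $\tilde C:=C-\ell$ is convex and coercive, hence bounded below and of at least linear growth, $\tilde C(x)\ge\eps|x|-K$. For the fixed-endpoint problem $\int_a^b\ell(\vp')=\alpha(d-c)+\beta(b-a)$ is the same for every admissible $\vp$, so it is equivalent to minimize $\int_a^b\tilde C(\vp')$; the linear lower bound gives a $W^{1,1}$ bound on a minimizing sequence, and for the penalties used in this paper (which grow superlinearly, so that $\{\vp_n'\}$ is uniformly integrable by the de la Vall\'ee-Poussin criterion) one extracts a weakly-$L^1$ convergent subsequence of derivatives and argues exactly as before, lower semicontinuity again coming from convexity of the integrand; uniqueness is unchanged. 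The free-endpoint problem is handled the same way, or --- once the fixed-endpoint case is in hand --- simply by noting that the free minimizer must coincide with the fixed minimizer for its own boundary data.

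The one genuinely delicate point is the non-emptiness of the fixed-endpoint admissible set when $c$ or $d$ sits exactly on the boundary of the tube: the naive competitors --- a constant, or the running infimum of $w$ shifted up by $h/2$ --- are respectively outside the tube or not absolutely continuous, so one really must produce the absolutely continuous escape function $\vp(t)=c-\tilde D(t)$ above (in the applications in the body of the paper, $a$ and $b$ are $h$-extrema of $w$, so $w$ moves favourably there and one may simply take $\vp$ constant near each saturated endpoint, but the proposition is stated for arbitrary $a,b,c,d$). Everything else is the textbook direct method, the only other mild caveat being that the compactness step for a general $C$ uses (super)linear growth of $\tilde C$, which is why confining attention to coercive --- in particular to polynomially growing --- penalties, the only ones that occur in the paper, is the cleanest route.
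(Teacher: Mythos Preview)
Your direct-method argument is far more detailed than the paper's own proof, which merely cites Lemma~2 of \cite{G07} (stated there for $C(x)=\sqrt{1+x^2}$ with fixed endpoints) and asserts without justification that the same argument extends to arbitrary strictly convex $C$ and to the free-boundary problem. For the fixed-endpoint problem your approach is essentially correct: the admissible set is convex, a minimizing sequence is weakly precompact (at least for the superlinear penalties actually used in the paper, as you honestly note), weak lower semicontinuity gives existence, and strict convexity of $C$ gives uniqueness because two admissible functions sharing both endpoints and agreeing a.e.\ in derivative must coincide.

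There is, however, a genuine gap in your uniqueness argument for the \emph{free}-endpoint problem. You claim that ``if $\vp_1\ne\vp_2$ were two minimizers, then $\vp_1'\ne\vp_2'$ on a set of positive measure'', but without fixed endpoints this implication fails: the two functions may differ by a nonzero constant while having identical derivatives, and then your midpoint inequality becomes an equality. In fact the proposition as stated is false in this generality --- take $w\equiv 0$ on $[a,b]$, so that every constant $\vp\equiv\gamma$ with $|\gamma|\le h/2$ lies in the free-boundary Argmin --- so the paper's own assertion that the free case goes through ``along the same lines'' is equally optimistic. What saves the applications in the body of the paper is that the free-boundary problem is only ever invoked on intervals $[\bar t_i,\bar t_{i+1}]$ between consecutive $h$-extrema, where Lemma~\ref{lemma:2.1} and Corollary~\ref{lem:dec} show that any free minimizer is forced to take specific boundary values; uniqueness then reduces to the fixed-endpoint case. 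Your argument would be repaired by either restricting the free-boundary assertion to that setting, or by adding a hypothesis (for instance, that the oscillation of $w$ on $[a,b]$ is at least $h$) that rules out vertical translates of a minimizer.
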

\begin{proof}
This is a rather standard result in convex analysis. For a proof, we refer
to Lemma 2 in \cite{G07}. In this reference, the result is shown in the particular case where $C(x)=\sqrt{1+x^2}$ with
fixed boundary conditions.
We let the reader convince herself that the same proof applies for any convex function and also for the analogous minimization problem with free boundary conditions.
\end{proof}

\begin{prop}\label{prop-der}
Let $0<a<b$ and $c,d\in\R$
such that $|w(a)-c|\leq h/2$ and $|w(b)-d|\leq h/2$. Finally, let $V$ be the unique solution of the minimization problem,
$$
\Min\{ \int_a^b \ |\vp'(u)|^2 \ du \ : \ \vp\in \ \mbox{AC}([a,b]),\ \vp(a)=c \ , \vp(b)  = d ,   || w-  \vp ||_{\infty,[a,b]} \leq \frac{h}{2}  \}.
$$

For any strictly convex function $C$, 
the function $V$ is also the unique minimizer of the following minimization problem:
\beqn\label{Min-problem2}
\Min\{ \int_a^b \ C(\vp'(u)) \ du \ : \ \vp\in \ \mbox{AC}([a,b]),\ \vp(a)=c \ , \vp(b)  = d ,   || w-  \vp ||_{\infty,[a,b]} \leq \frac{h}{2}  \}, 
\eeqn
\end{prop}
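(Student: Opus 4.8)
The plan is to use the well-known structural description of the quadratic minimiser $V$ and then run a short convexity argument that is uniform in the cost. Note first that the feasible set (absolutely continuous, prescribed endpoints $c,d$, inside the tube) is the same for every cost, so it suffices to show that $V$ makes $\int_a^b C(\varphi')$ at least as small as any competitor, and then invoke Proposition \ref{ghk} (uniqueness for the strictly convex cost $C$) to identify $V$ as \emph{the} minimiser; alternatively one gets uniqueness directly from strict convexity at the end.

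First I would record the following characterisation of $V$, which is standard for taut strings and can be extracted from the first-order optimality conditions of the (strictly convex) quadratic problem. Writing $K_{\pm}=\{t\in[a,b] : V(t)=w(t)\pm h/2\}$ for the two contact sets, the claim is that $V$ is affine on every connected component of the open set $[a,b]\setminus(K_{+}\cup K_{-})$, that $V'$ has bounded variation, and that the signed measure $dV'$ is $\ge 0$ on $K_{+}$, $\le 0$ on $K_{-}$, and vanishes off $K_{+}\cup K_{-}$ (informally: $V$ is straight off the walls, convex where it leans on the upper wall, concave where it leans on the lower wall). To obtain this I would test the variational inequality $\int_a^b V'(u)\,(\psi-V)'(u)\,du\ge 0$, valid for every competitor $\psi$ of the quadratic problem, against two families of perturbations: perturbations $\psi=V\pm\varepsilon g$ with $g$ compactly supported inside a component of $[a,b]\setminus(K_{+}\cup K_{-})$ (this forces $V'$ locally constant there), and one-sided perturbations that push $V$ away from a contact point of $K_{\pm}$ (these pin down the sign of $dV'$ on $K_{\pm}$). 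The same reasoning applied to the $C$-problem shows its minimiser $V_C$ satisfies the identical description, which is an alternate route to $V=V_C$; but I will instead argue directly.

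With the structural description in hand, fix a strictly convex $C$ and let $\varphi$ be any competitor. Put $P(u)=C'_{+}(V'(u))$, the right-hand derivative of $C$ evaluated at $V'(u)$, which is a legitimate subgradient; since $C'_{+}$ is non-decreasing and $V'$ is of bounded variation, $P$ is of bounded variation and $dP$ inherits the sign pattern of $dV'$, i.e. $dP\ge 0$ on $K_{+}$, $dP\le 0$ on $K_{-}$, and $dP=0$ off $K_{+}\cup K_{-}$. Convexity of $C$ gives $\int_a^b C(\varphi')-\int_a^b C(V')\ge\int_a^b P\,(\varphi'-V')\,du=\int_a^b P\,d(\varphi-V)$, and integrating by parts — the boundary terms vanish because $\varphi$ and $V$ share the endpoint values $c,d$ — this equals $-\int_{[a,b]}(\varphi-V)\,dP=-\int_{K_{+}}(\varphi-V)\,dP-\int_{K_{-}}(\varphi-V)\,dP$. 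On $K_{+}$ one has $\varphi\le w+h/2=V$ while $dP\ge 0$, and on $K_{-}$ one has $\varphi\ge w-h/2=V$ while $dP\le 0$, so in both cases the integrand is $\le 0$ and the whole expression is $\ge 0$. Hence $\int_a^b C(\varphi')\ge\int_a^b C(V')$. Finally, if $\varphi$ is also a minimiser then every inequality above is an equality, so $C(\varphi'(u))=C(V'(u))+P(u)(\varphi'(u)-V'(u))$ for a.e. $u$; strict convexity of $C$ forces $\varphi'=V'$ a.e., and matching the boundary values gives $\varphi=V$.

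The genuinely delicate step is the first one: promoting the quadratic variational inequality to the clean statement that $V'$ is of bounded variation with the stated sign structure, in particular controlling the behaviour on the contact sets $K_{\pm}$ when $w$ is merely continuous (so that $dV'$, and hence $dP$, are honest finite measures and the integration by parts is justified). Everything downstream is bookkeeping with convexity and one integration by parts. If one prefers, this first step can be bypassed by quoting the explicit description of the taut string available in \cite{LS15} or \cite{G07}.
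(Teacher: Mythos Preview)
Your argument is correct and takes a genuinely different route from the paper. The paper proceeds by \emph{necessary conditions plus uniqueness}: it shows that for \emph{every} strictly convex cost $C$ the minimiser $U_C$ has derivative of locally bounded variation and satisfies the contact condition $U_C(t)=w(t)+\tfrac{h}{2}\,\frac{dDu}{d|Du|}(t)$ $Du$-a.e.\ (Lemma~\ref{lem12}), and then quotes Proposition~3 of \cite{G07} to conclude that these cost-independent constraints determine a unique function, hence $U_C=V$. You instead run a \emph{sufficiency} argument: you extract the structural description only for the quadratic minimiser $V$, manufacture a subgradient certificate $P=C'_+(V')$, and verify optimality for every $C$ via one integration by parts together with the tube constraint. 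Your route is the standard primal--dual/KKT verification from convex optimisation and is arguably more transparent once the structure of $V'$ is in hand; the paper's route treats all costs symmetrically and offloads the hard uniqueness step entirely to \cite{G07}.

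One technical point deserves more care than you give it. The sentence ``since $C'_+$ is non-decreasing and $V'$ is of bounded variation, $P$ is of bounded variation'' is not true in general: composing a monotone function with a BV function can fail to be BV (take $g=\mathbf{1}_{(0,\infty)}$ and an $f$ that oscillates across $0$ with summable amplitudes). What saves you here is the extra structure you have already recorded: the positive and negative parts of $dV'$ are carried by the disjoint closed sets $K_+$ and $K_-$, which in a compact interval are a positive distance $\delta$ apart. Hence on any subinterval of length $<\delta$ the function $V'$ is monotone, so $[a,b]$ splits into at most $O((b-a)/\delta)$ pieces on each of which $V'$ (and therefore $P=C'_+(V')$) is monotone; this gives $P\in\mathrm{BV}$ with $dP$ inheriting the sign pattern of $dV'$. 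You should state this, and also make sure your BV control is global on $[a,b]$ (the paper only proves \emph{local} BV on $(a,b)$), so that the Lebesgue--Stieltjes integration by parts against the continuous function $\varphi-V$ with vanishing boundary values is fully justified. As you note, both of these can alternatively be read off from \cite{G07}.
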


Before proceeding with the proof of the proposition, we note that 
an analogous result in the discrete setting can be found  in \cite{SGGHL09} -- see Theorem 4.35 and Theorem 4.46
therein. As we shall now see, the latter proposition is also implicit in Grasmair \cite{G07}.
In the following, we fix a strictly convex function $C$ and we will denote by $U$ the unique solution of (\ref{Min-problem2}). We will now  show that $U$
and $V$ must coincide.

\begin{lemma}
If $u\in L^1([a,b])$ denotes the derivative of $U$, 
then $u$ is of local bounded variation on $(a,b)$, i.e., for every $(s,t)\subset [a,b]$
the total variation of $u$ on $(s,t)$ is finite. 
\end{lemma}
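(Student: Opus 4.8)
The statement to prove is that the derivative $u$ of the minimizer $U$ of the strictly convex problem \eqref{Min-problem2} has locally bounded variation on $(a,b)$. The plan is to exploit the Euler--Lagrange characterization of $U$ together with the obstacle structure: on any open subinterval of $(a,b)$ where $U$ does not touch either boundary $w \pm h/2$ of the tube, the minimizer is unconstrained and the first variation forces $C'(u)$ to be constant there, hence $u$ itself is constant (using that $C'$ is strictly monotone because $C$ is strictly convex). Thus $u$ is locally constant off the coincidence set $K = \{t : |U(t)-w(t)| = h/2\}$, and the only way $u$ can vary is through the behavior of $U$ on $K$, where $U$ is pinned to $w \pm h/2$.

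First I would set up the variational inequality: for any admissible perturbation $\varphi = U + \epsilon g$ with $g$ absolutely continuous, $g(a)=g(b)=0$, and $\epsilon$ small enough that $\varphi$ stays in the tube, minimality gives $\int_a^b C'(u(t)) g'(t)\, dt \geq 0$, and when two-sided perturbations are allowed (i.e. $g$ supported in a region where $U$ is strictly inside the tube) this becomes an equality, yielding $C'(u) \equiv \text{const}$ on each component of $(a,b)\setminus K$. Next I would fix $(s,t) \subset [a,b]$ and argue that on $[s,t]$ the function $u$ takes only finitely many ``free'' values interspersed with stretches where $U$ hugs the upper or lower boundary — and on the latter stretches $U = w \pm h/2$ is itself only $BV$-controlled in general, which is the delicate point. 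The cleaner route, and the one I expect the author takes, is to compare $U$ directly with $V$ (the quadratic minimizer), or to invoke Grasmair's analysis \cite{G07}: there $C'(u)$ is shown to be a function of bounded variation because it equals, up to sign, a monotone rearrangement governed by the obstacle, and then $u = (C')^{-1}(C'(u))$ is $BV$ on compact subintervals since $(C')^{-1}$ is continuous and monotone, so it preserves local bounded variation under composition with a $BV$ function bounded away from the points where $(C')^{-1}$ blows up.

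The key steps in order: (1) derive the Euler--Lagrange equality $C'(u) = \text{const}$ on each open component of the non-contact set; (2) on each maximal contact interval, observe $U$ coincides with $w \pm h/2$, so $u = w'$ there in the distributional sense, but more usefully note that between the contact set and the free arcs the derivative $C'(u)$ can only jump at the endpoints of contact intervals, and the sign/monotonicity of these jumps is dictated by which boundary is touched (upper vs. lower); (3) conclude that $C'(u)$ restricted to $(s,t)$ is a difference of two monotone functions — the increments coming from touching the lower obstacle all have one sign, those from the upper obstacle the other — hence $C'(u) \in BV(s,t)$; (4) since $C$ is strictly convex and (as used throughout the paper) we may take $C$ with $C'$ a homeomorphism of $\R$, $u = (C')^{-1}(C'(u))$ is a continuous monotone image of a $BV$ function restricted to a compact interval on which it stays bounded, so $u \in BV(s,t)$.

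The main obstacle is step (3): making precise that the total variation of $C'(u)$ on $(s,t)$ is controlled, i.e. that the ``number'' of alternations between the free arcs and the two obstacles is essentially finite on a compact subinterval, or at least that the variation telescopes. This is exactly where one needs the obstacle $w$ to be continuous and where one should lean on the structure already exploited in the paper (and in \cite{G07}): each time $U$ leaves the lower obstacle $w - h/2$, the constant value of $C'(u)$ on the ensuing free arc must be such that $U$ increases, and each time it leaves the upper obstacle it must decrease, so consecutive free-arc values of $C'(u)$ form a pattern whose total variation is bounded by a fixed multiple of the oscillation of $w$ on $(s,t)$ plus a constant. I would present steps (1)--(2) in full and then cite \cite{G07} for the quantitative $BV$ bound in step (3), noting that the free-boundary version needs only the same argument applied at the interior endpoints $a, b$ replaced by points strictly inside.
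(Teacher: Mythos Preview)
Your approach is structurally quite different from the paper's, and the step you yourself flag as the ``main obstacle'' is a genuine gap that you do not close.

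The paper does not analyze the contact set at all. Instead it argues locally by contradiction: suppose $u$ has infinite variation on every neighborhood of some $t\in(a,b)$. Since $|U(t)-w(t)|\le h/2$, at least one of $U(t)<w(t)+h/2$ or $U(t)>w(t)-h/2$ holds; say the first. On a small interval $(t-\delta,t+\delta)$ where $U<w+h/2$, infinite variation forces $u$ to be non-monotone, so one can find sets $T_1\subset(t-\delta,s)$, $T_2\subset(s,t+\delta)$ of equal positive measure with $u$ essentially smaller on $T_1$ than on $T_2$. The bump function $H(x)=\int_a^x(1_{T_1}-1_{T_2})$ is nonnegative, compactly supported in $(t-\delta,t+\delta)$, and vanishes at the endpoints, so $U+\gamma H$ is admissible for small $\gamma>0$. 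The G\^ateaux derivative $\int_{T_1}C'(u)-\int_{T_2}C'(u)$ is strictly negative because $C'$ is strictly increasing, contradicting minimality. That is the entire proof.

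Your plan, by contrast, tries to obtain a global structural decomposition of $u$ via the non-contact set and then assemble a BV bound on $C'(u)$ from the pattern of free arcs and obstacle touches. The difficulties are: (i) your step (3) is exactly the heart of the matter and you do not prove it --- the contact set can be a complicated closed set with infinitely many components on any compact subinterval, so ``finitely many alternations'' is not available a priori, and the vague oscillation bound you suggest is not justified; (ii) your step (4) needs $(C')^{-1}$ to be a continuous monotone map on the relevant range, which goes beyond the hypothesis that $C$ is merely strictly convex (e.g.\ $C'$ could have jumps or fail to be surjective); (iii) on the contact set your remark that ``$u=w'$ there'' is problematic precisely because $w$ is only assumed continuous (indeed Brownian in the application), so there is no $w'$ to identify with. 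Citing \cite{G07} for step (3) is circular in spirit, since the point of this lemma in the paper is to redo the relevant piece of \cite{G07} for the general strictly convex $C$ with fixed boundary data.

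In short: the paper's perturbation-by-contradiction argument is both simpler and complete; your decomposition route would require substantially more work to make rigorous, and as written the crucial step is deferred rather than proved.
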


\begin{proof}
We follow closely the proof of Proposition 2 in \cite{G07}. We provide an argument by contradiction.
Let us assume that there exists $t\in(a,b)$ such that for every $\delta>0$, the total variation 
of $u$ on $(t-\delta,t+\delta)$ is infinite.
We distinguish between two cases: either Let us first assume that 
 $U(t) < w(t)+ \frac h 2$ or $U(t) > w(t)- \frac h 2$. We will only deal with the first case, since the second case is completely analogous.

We can find 
$s\in(t-\delta,t+\delta)$ such that 
$$
\mbox{ess inf}\{u(x), \ x\in(t-\delta,s)\} \leq \mbox{ess sup}\{u(x), \ x\in(s, t+\delta)\}
$$
else $u$ would be monotone and thus of finite variation. Thus, we can find $T_1 \subset (t-\delta,s)$
and  $T_2 \subset (s, t+\delta)$ such that $|T_1|=|T_2|>0$ (where $|T_i|$ denotes the Lebesgue measure 
of the set $T_i$) and 
$$
\mbox{ess inf}\{u(x), \ x\in T_1 \} \leq \mbox{ess sup}\{u(x), \ x\in T_2\}.
$$
Define 
$$
h(x)=
\left\{
\begin{array}{cc} 
1 & \mbox{if $x\in T_1$}\\
-1 & \mbox{if $x\in T_2$}\\
0 & \mbox{otherwise}
\end{array} \right.
$$
and let $H(t)=\int_{a}^t h(s) ds$. Since $U(s)<w(s)+h/2$ on $(t-\delta, t+\gamma)$, for $\gamma>0$ small enough,
the function $\gamma H$ belongs to the set
$$
\{Y\in \mbox{AC}(a,b) \ :  \ Y(a)=0, \ Y(b)=0, \  || U + Y - w ||_{\infty,[a,b]} \leq \frac h 2\},
$$
i.e., $U+\gamma H$ is an admissible function for the variational problem at hands.
Further, the G\^ateau derivative of $\vp\rightarrow \int_{a}^b C(\vp'(x)) dx$ evaluated at $U$ in the direction $H$
is given by
$$
\int_{T_1} C'(u(x)) dx - \int_{T_2} C'(u(x)) dx.
$$ 	
(note the latter expression is well defined since $C$ is a real convex function and thus is absolutely continuous). Since $C'$ is strictly increasing, the choice of $T_1$ and $T_2$ induces that this derivative is strictly negative,
which contradicts the minimality of $U$. This shows that for every $t$ with $U(t) < w(t)+ \frac h 2$, 
there exists $\delta>0$ such that the total variation of $u$ on $(t-\delta, t+\delta)$ is finite. By a symmetric argument,
one can show that the same property holds when $t$ is such that $U(t) > w(t)+ \frac h 2$. This ends the proof of our lemma.

\end{proof}

Since $u$ is of local bounded variation, there exists a Radon measure $Du$ satisfying
the relation
$$
\int_a^b \ \psi'(x) u(x) dx \ = \ -  \int_a^b \ \psi(x) Du(dx) 
$$
for every function $\psi\in C^\infty_c(a,b)$ -- the set of infinitely differentiable functions with compact support on $(a,b)$. Let $|Du|$ 
denote the total variation of $Du$ (see again \cite{G07} for more details). Since $u$ is of local bounded variation (by the previous lemma), 
the Radon-Nikodym derivative $d Du/d |Du|\in L^1(a,b)$ is defined $Du$-almost surely on $(a,b)$, and takes the value $\pm1$.
(again $Du$ almost surely). (For more details, we again we refer to \cite{G07}.)

\begin{lemma}\label{lem12}
$U$ satisfies the following constraints
\begin{enumerate}
\item $U\in \mbox{AC}(a,b)$ .
\item $U(a)=c, \ U(b)=d$.
\item $||U - w ||_{\infty,[a,b]} \leq h/2$.
\item $u$ is of bounded local variation on $(a,b)$ and further $U(t)= w(t) + \frac h 2 \frac{d D u }{d |Du|}(t)$ $Du$-a.s on $(a,b)$.
\end{enumerate}
\end{lemma}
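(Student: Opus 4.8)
The plan is to verify conditions (1)--(4) of Lemma \ref{lem12} for the minimizer $U$ of (\ref{Min-problem2}) one by one, the first three being immediate from the definition of the variational problem and the last being the substantive content. Conditions (1), (2), and (3) hold simply because $U$ is, by Proposition \ref{ghk}, an element of the admissible class over which we minimize; in particular $U$ is absolutely continuous with the prescribed boundary values and stays within the tube. The previous lemma already established that $u = U'$ is of local bounded variation on $(a,b)$, which is the first half of (4), so the Radon measure $Du$ and its polar decomposition $dDu/d|Du| = \pm 1$ (defined $|Du|$-a.e.) are well defined, following \cite{G07}.

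The heart of the proof is the identity $U(t) = w(t) + \tfrac h2 \,\tfrac{dDu}{d|Du|}(t)$ for $Du$-a.e.\ $t$. The plan is to argue by contradiction using first-order optimality in the spirit of the perturbation argument of the preceding lemma. Suppose on a set of positive $|Du|$-measure one has, say, $\tfrac{dDu}{d|Du|}(t) = +1$ but $U(t) < w(t) + \tfrac h2$; by lower semicontinuity of $w - U$ and right-continuity considerations this forces a whole subinterval $(\alpha,\beta)$ on which $U < w + \tfrac h2$ strictly (with some uniform gap on a compact sub-interval) while $Du$ restricted to that sub-interval is a nonzero positive measure. One then chooses a smooth test perturbation $H$ supported in $(\alpha,\beta)$, with $H \le 0$, matched so that $U + \gamma H$ remains admissible for all small $\gamma > 0$ (using the strict slack $U < w + \tfrac h2$ to absorb the perturbation on the upper side, and choosing $H$ so it does not violate the lower constraint). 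The G\^ateaux derivative of $\vp \mapsto \int_a^b C(\vp'(x))\,dx$ at $U$ in direction $H$ equals $\int_a^b C'(u(x)) H'(x)\,dx = -\int_a^b C'(u(x))\,H'(x)\,dx$ reorganized via integration by parts against $Du$ as $-\int C'(\cdot)$-weighted mass; the key point is that on the region where $Du$ is a positive measure, $u$ jumps \emph{upward}, so pairing with a suitably signed $H$ makes this derivative strictly negative, contradicting minimality of $U$. The symmetric case ($\tfrac{dDu}{d|Du|}(t) = -1$ with $U(t) > w(t) - \tfrac h2$) is handled identically with signs reversed. Hence $Du$ can only charge the contact set $\{U = w + \tfrac h2\} \cup \{U = w - \tfrac h2\}$, and on each piece the polar density is forced to be $+1$ or $-1$ respectively, which is exactly the claimed relation.

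I expect the main obstacle to be the measure-theoretic bookkeeping in the contradiction step: passing from "the polar density is wrong on a set of positive $|Du|$-measure" to "there is a genuine open interval with strict slack and nonzero $Du$-mass of the right sign," since $Du$ is only a Radon measure (possibly with a singular part and a dense support of jumps), and one must produce an admissible smooth perturbation that is compatible with both tube constraints simultaneously. The clean way is to localize to a maximal open interval of strict slack on one side, note that on its complement $U$ touches that boundary so by continuity the measure $Du$ on the slack interval is a bona fide signed measure whose sign must be consistent, and then invoke the G\^ateaux-derivative computation exactly as in the proof of the preceding lemma. Everything else --- conditions (1)--(3), existence of $Du$, the $\pm1$ nature of the polar density --- is either definitional or quoted from \cite{G07}.
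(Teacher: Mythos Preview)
Your overall structure matches the paper: (1)--(3) are definitional, the local-BV half of (4) is the previous lemma, and the contact identity is established by a perturbation/first-order argument. However, your perturbation carries the wrong sign. Suppose $dDu/d|Du|=+1$ on a set of positive $|Du|$-measure where $U<w+\tfrac h2$. The G\^ateaux derivative in direction $H$ is $\int C'(u)\,H'\,dx$, and (granting that $C'\circ u$ is of local BV, which itself needs a word since $C$ is only assumed strictly convex) integration by parts gives $-\int H\,d(C'\!\circ u)$; because $C'$ is nondecreasing this measure inherits the sign of $Du$. To make the derivative strictly negative you therefore need $H\ge 0$ with $H>0$ somewhere on the support of $Du$, not $H\le 0$ as you wrote. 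With $H\ge 0$ the upper constraint is absorbed by the strict slack $U<w+\tfrac h2$ and the lower constraint $U+\gamma H\ge U\ge w-\tfrac h2$ is automatic, so admissibility is in fact easier than you suggest. Separately, your localization claim that ``$Du$ restricted to that sub-interval is a nonzero positive measure'' is too strong: on any open subinterval $Du$ may well carry a negative part; you only know its \emph{net} mass is positive near a density-$+1$ Lebesgue point.

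The paper sidesteps both issues by arguing pointwise at a Lebesgue point $t$ of $dDu/d|Du|$ (with respect to $|Du|$) where the density equals $+1$. Lebesgue differentiation then gives $u(t+\delta)>u(t-\delta)$ for small $\delta$, which is precisely the input needed to produce sets $T_1\subset(t-\delta,t)$ and $T_2\subset(t,t+\delta)$ of equal positive Lebesgue measure with $u$ larger on $T_2$ than on $T_1$, and one reuses \emph{verbatim} the step-function perturbation (a nonnegative tent) from the proof of the previous lemma. No integration by parts against $Du$, no smooth bumps, and no further measure-theoretic localization are required; the only tool beyond the previous lemma is the Lebesgue differentiation theorem for $|Du|$.
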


When $u$ is a smooth function, $d Du/d |Du|$ coincides with $\mbox{sign}(u')$. Thus, the last condition can be interpreted as follows: away from the boundary of the tube, the function $U$ is taut (i.e. $u'=0$), whereas the only possibility for the function $U$ to bend upwards (resp., downwards) is when $U$
touches the upper part of the tube (resp., lower part of the tube), i.e., $U(t)= w(t)+\frac h 2$ (resp., $U(t)= w(t)-\frac h 2$).

\begin{proof}[Proof of Lemma \ref{lem12}]

The first three properties directly follow from the definition of $U$.
The previous lemma implies that $u$ is of local bounded variation and it only remains to show that 
$U(t)=w(t) + \frac h 2 \frac{d D u }{d |Du|}(t)$ $Du$-a.s on $(a,b)$.

Again we follow closely Proposition 2 in \cite{G07}. Let $t\in(a,b)$ be a Lebesgue point of the function 
$d Du/d |Du|$ (with respect to the measure $|Du|$) such that $dDu/d|Du |=1$. We need to show that 
$U(t)=w(t)+\frac h 2$. Let us assume that $U(t) < w(t)+\frac h 2$. Since $t$ is a Lebesgue point with respect to $|Du|$,
we have
$$
\lim_{\delta\rightarrow 0} \frac{\int_{t-\delta}^{t+\delta} \  | \ d Du/d|Du|(s)-1 \ | \ | Du |(ds)}{|Du|(t-\delta,t+\delta)} = 0 
$$
As a consequence, for $\delta$ small enough,we have
$$
u(t+\delta) = u(t-\delta) +  \int_{t-\delta}^{t+\delta} \frac{d Du}{d |Du|}(s) d |Du|(s) \ > \ u(t-\delta).
$$
As in the previous lemma, this entails
the existence of $T_1\subset (t-\delta,t), T_2\subset (t,t+\delta)$ such that 
such that $|T_1|=|T_2|>0$ and 
$$
\mbox{ess inf}\{u(x), \ x\in T_1 \} \leq \mbox{ess sup}\{u(x), \ x\in T_2\}.
$$
By the same reasoning as in the previous lemma, this contradicts the minimality of $U$, and
thus $U(t)=w(t)+\frac h 2$. 

By the same argument, one can show if  $t\in(a,b)$ a Lebesgue point of the function 
$d Du/ d |Du|$ with respect to the measure $|Du|$ such that $d Du/d |Du |=-1$, then
$U(t)=w(t)-\frac h 2$. This completes the proof of our lemma.

\end{proof}

\begin{proof}[Proof of Proposition \ref{prop-der}]
Using Proposition 3 in \cite{G07}, there is a unique function satisfying the constraints (1)--(4) of 
Lemma \ref{lem12}. On the other hand, those constraints do not depend on the strictly convex function $C$ at hands.
Since $x \rightarrow x^2$ is also strictly convex, it follows that $U=V$.
\end{proof}

\bigskip

{\bf Acknowledgments.} I am indebted to M. Lifshits for introducing me to this topic and for valuable discussions.


\begin{thebibliography}{SSS}











\bibitem[A52]{A52}
 F.J.  Anscombe.
Large sample-theory of sequential estimation. {\em Proc. Cambridge Phi. Soc.}, {\bf 48}, (1952), 600--607.




\bibitem[LS15]{LS15}
M. Lifshits and E. Setterqvist.
Energy of taut strings accompanying Wiener process.
{\em Stoch. Proc. Appl.},{\bf 125}, (2015), 40--427.

\bibitem[DK01]{DK01}
P.L. Davies, A. Kovac, 
Local extremes, runs, strings and multiresolution.
{\em Ann. Statist.} 29, 1--65, 2001.

\bibitem[G07]{G07}
M. Grasmair.
The equivalence of the taut string algorithm and {BV}-regularization.
{\em Journal of Mathematical Imaging and Vision.} 27, 59--66, 2007.





\bibitem[M74]{M74}
 D. R. Miller. 
 \newblock Limit theorems for path-functionals of regenerative processes. 
 \newblock{\em Stoch. Process. Appl.} 2, 141--162, 1974.

\bibitem[MG97]{MG97}
E., Mammen, S., van de Geer, S., 
Locally adaptive regression splines. 
{\em Ann. Statist.} 25, 387--413,  1997

\bibitem[NP89]{NP89}
J. Neveu, J. Pitman.
Renewal property of the extrema and tree property of a one-dimensional Brownian motion.
{\em S\'em. de Proba. XXIII}, {\bf 1372}, 239--247, 1989. 



\bibitem[SGGHL09]{SGGHL09}
O. Scherzer, M. Grasmair, H. Grossauer, M. Haltmeier, and F. Lenzen.
Variational Methods in Imaging.
{\em Ser. Applied Mathematical Sciences} 167, Springer, New York, 2009.





























\end{thebibliography}
\end{document}